\numberwithin{equation}{section}
\numberwithin{equation}{section} \theoremstyle{plain}
\newtheorem{theorem}{Theorem}[section]%
\newtheorem{lemma}[theorem]{Lemma}%
\newtheorem{remark}[theorem]{Remark}%
\newcommand{\R}{{\mathbb R}}
\newcommand{\ds}{\displaystyle}
\begin{document}
\title
{The existence of positive solution for an elliptic problem with critical growth and logarithmic perturbation
}

\maketitle

\begin{center}
\author{Yinbin Deng}
\footnote{ School of Mathematics  and  Statistics, Central China Normal University, Wuhan 430079, China.  Email: \texttt{ybdeng@ccnu.edu.cn}.},
\author{Qihan He}
\footnote{ College of Mathematics and Information Science, Guangxi Center for Mathematical Research,
Guangxi University,
Nanning, 530003, China, Email: \texttt{heqihan277@gxu.edu.cn}.},
\author{Yiqing Pan}
\footnote{ College of Mathematics and Information Science, Guangxi Center for Mathematical Research,
Guangxi University,
Nanning, 530003, China, Email: \texttt{13718049940@163.com}. }
and
\author{Xuexiu Zhong}
\footnote{ South China Research Center for Applied Mathematics and Interdisciplinary Studies,
South China Normal University,
Guangzhou 510631, China, Email: \texttt{zhongxuexiu1989@163.com}. }
\end{center}

\begin{abstract} We consider the existence and nonexistence of positive solution for the following Br\'ezis-Nirenberg problem with logarithmic perturbation:
    \begin{equation*}
       \begin{cases}
          -\Delta u={\left|u\right|}^{{2}^{\ast }-2}u+\lambda u+\mu u\log {u}^{2} &x\in \Omega,\\
         \quad \;\:\, u=0& x\in \partial \Omega,
       \end{cases}
    \end{equation*}
   where $\Omega$ $\subset$ $\R^N$ is a bounded smooth domain, $\lambda, \mu \in \R$, $N\ge3$ and ${2}^{\ast }:=\frac{2N}{N-2}$ is the critical  Sobolev exponent for the embedding $H^1_{0}(\Omega)\hookrightarrow L^{2^\ast}(\Omega)$. The uncertainty of the sign of $s\log s^2$ in $(0, +\infty)$ has some interest in itself. We will show the existence of positive ground state solution which is of mountain pass type provided  $\lambda\in \R, \mu>0$ and $N\geq 4$. While the case of $\mu<0$ is thornier. However, for $N=3,4$ $\lambda\in (-\infty, \lambda_1(\Omega))$, we can also establish the existence of positive  solution under some further suitable assumptions.  And a nonexistence result is also obtained for  $\mu<0$ and $-\frac{(N-2)\mu}{2}+\frac{(N-2)\mu}{2}\log(-\frac{(N-2)\mu}{2})+\lambda-\lambda_1(\Omega)\geq 0$ if $N\geq 3$. Comparing with the results in Br\'ezis, H. and  Nirenberg, L. (Comm. Pure Appl. Math.  1983), some new interesting phenomenon occurs when the parameter $\mu$ on logarithmic perturbation is not zero.

     \end{abstract}
     \textbf{Keywords:} Br\'ezis-Nirenberg Problem, Critical exponents, Positive solution, Logarithmic perturbation

    \section{Introduction and main results}
     \indent In this paper, we investigate the existence and nonexistence  of positive solution for the following  Br\'ezis-Nirenberg problem with a logarithmic term:
     \begin{equation}\label{1.1}
       \begin{cases}
           -\Delta u={\left|u\right|}^{{2}^{\ast }-2}u+\lambda u+\mu u\log {u}^{2} & x\in \Omega,\\
           \quad \;\:\,  u=0&x\in \partial\Omega,
       \end{cases}
     \end{equation}
     where $\Omega$ $\subset$ $\R^N$ is a bounded smooth domain, $\lambda, \mu \in \R$, $N\ge3$, and
     ${2}^{\ast }=\frac{2N}{N-2}$ is the critical Sobolev exponent for the embedding $H^1_{0}(\Omega)\hookrightarrow L^{2^\ast}(\Omega)$. Here $H^1_{0}(\Omega)$ denotes the closure of $C^\infty_0(\Omega)$ equipped with the norm $\left\|u\right\|:=(\int_\Omega |\nabla u|^2 dx)^\frac{1}{2}$.

     Our motivation to consider \eqref{1.1} is that it resembles some variational problems in geometry and physics,  which is lack of compactness. The most notorious example is Yamabe's problem: finding a function $u$ satisfying
    $$
       \begin{cases}
           -4\frac{N-1}{N-2}\Delta u=R^\prime{\left|u\right|}^{{2}^{\ast }-2}u-R(x) u & \text{ }~\hbox{on}~\mathcal{M},\\
            \quad \;\:\,\quad \;\:\,\;\:\, u>0&\text{ }~\hbox{on}~\mathcal{M},
       \end{cases}
    $$
  where  $R^\prime$ is a constant, $\mathcal{M}$ is  an $N$-dimensional Riemannian manifold, $\Delta$ denotes the Laplacian and $R(x)$ represents the scalar curvature. Some other examples we refer to \cite{bre1,lie1,lio,tau1,tau2,uhl} and the references therein.

    \indent When $\lambda=\mu=0$,  Eq.\eqref{1.1} is reduced to
    \begin{equation}\label{eq:20220524-e1}
       \begin{cases}
          -\Delta u={\left|u\right|}^{{2}^{\ast }-2}u& \text{ }x\in{\Omega },\\
          \quad \;\:\, u=0&\text{ } x\in{\partial \Omega }.
       \end{cases}
    \end{equation}
Pohozaev \cite{poh} asserts that Eq.\eqref{eq:20220524-e1} has  no nontrivial  solutions when $\Omega$ is starshaped.\:\,But, as Br\'{e}zis and Nirenberg have shown in \cite{bre2}, a lower-order terms can reverse this circumstance.  Indeed, they considered the following classical problem
    \begin{equation}\label{1.3}
      \begin{cases}
          -\Delta u={\left|u\right|}^{{2}^{\ast }-2}u+\lambda u& \text{ }x\in{\Omega },\\
            \quad \;\:\, u=0&\text{ } x\in{\partial \Omega },
       \end{cases}
    \end{equation}
    with $\lambda\in \R, N\geq 3$ and $\Omega\subset {\mathbb{R}}^{N}$ is a bounded domain.
  They found out that the existence of a solution depends heavily on the values of $\lambda$ and $N$.  Precisely, they showed that:

  \begin{enumerate}
    \item [$(i)$]  when $N \ge 4$ and  $\lambda\in \left(0,\lambda_{1}(\Omega)\right)$, there exists a positive solution for Eq.\eqref{1.3};

    \item  [$(ii)$]   when $N=3$ and $\Omega$ is a ball, Eq.\eqref{1.3} has a positive solution if and only if  $\lambda\in \left(\frac{1}{4}\lambda_{1}(\Omega),\lambda_{1}(\Omega)\right)$;
   \item  [$(iii)$] Eq. \eqref{1.3} has no solutions when $\lambda<0$ and $\Omega$ is starshaped;
  \end{enumerate}
  where $\lambda_1(\Omega)$ denotes the first eigenvalue of $-\Delta $ with zero Dirichlet boundary value. Furthermore, Br\'{e}zis and Nirenberg \cite{bre2} also considered  the following general case:
\begin{equation}\label{bubu1.3}
      \begin{cases}
          -\Delta u={\left|u\right|}^{{2}^{\ast }-2}u+f(x,u)& \text{ }x\in{\Omega },\\
           \quad \;\:\,u=0&\text{ } x\in{\partial \Omega },
       \end{cases}
    \end{equation}
where $f(x,u)$ satisfies some of the following assumptions :

\begin{itemize}
\item[$(f_1)$]$f(x,u)=a(x)u+g(x,u), a(x)\in L^\infty(\Omega);$
\item[$(f_2)$]$\lim\limits_{u\to 0^+} \frac{g(x,u)}{u}=0,$ uniformly in $x\in \Omega$;
\item[$(f_3)$]$\lim\limits_{u\to +\infty} \frac{g(x,u)}{u^{2^*-1}}=0,$ uniformly in $x\in \Omega$;
\item[$(f_4)$]$\exists\,\alpha>0$ such that $\int (|\nabla v|^2-a(x)v^2)dx\geq \alpha\int v^2 dx$ for all $v\in H^1_0(\Omega);$
\item[$(f_5)$]$f(x,u)\geq 0$ for a.e $x\in \omega_0$ and for all $u\geq 0$,  where $\omega_0$ is some nonempty open subset of $\Omega$;
\item[$(f_6)$]$f(x,u)\geq \delta_0>0$ for a.e $x\in \omega_0$ and for all $u\in I$, where $\omega_0$ is given in $(f_5)$, $I\subset (0, +\infty)$ is some nonempty open interval and $\delta_0>0$ is some constant;
\item[$(f_7)$]$f(x,u)\geq \delta_1 u$ for   a.e $x\in \omega_1$ and for all $u\in [0,A],$ or, $f(x,u)\geq \delta_1 u$ for   a.e $x\in \omega_1$ and for all $u\in [A, +\infty],$ where $\omega_1$ is some nonempty open subset of $\Omega$ and $\delta_1, \,A$ are two positive constants;
\item[$(f_8)$]$\lim\limits_{u\to +\infty}\frac{f(x,u)}{u^3}=+\infty$ uniformly in $x\in \omega_2$, where $\omega_2$ is some nonempty open subset of $\Omega$.
\end{itemize}

\noindent They  showed that if the assumptions $(f_1)-(f_4)$ hold and there exists some $0\leq u_0\in H^1_0(\Omega)\setminus\{0\}$ such that $\displaystyle\sup_{t\geq 0}I(tu_0)<\frac{1}{N}S^\frac{N}{2}$, then Eq.\eqref{bubu1.3} has a positive solution. More precisely, they proved that:
\begin{enumerate}
  \item [$(i)$] If $N\geq 5$, Eq.\eqref{bubu1.3} has a positive solution  provided $(f_1)-(f_6)$;
  \item [$(ii)$] If $N=4$, Eq.\eqref{bubu1.3} has a positive solution  provided $(f_1)-(f_5)$ and $(f_7)$;
  \item [$(iii)$]  If $N=3$, Eq.\eqref{bubu1.3} has a positive solution provided $(f_1)-(f_5)$ and $ (f_8)$.
\end{enumerate}
Some similar results can be seen  in \cite{bar,gao,lix}. Barrios et al. \cite{bar} proved the existence of positive solution for a fractional critical problem with a lower-order term, and Gao and Yang \cite{gao},
Li and Ma \cite{lix} considered the existence of positive solution to a Choquard equation with critical exponent and lower-order term in a bounded domain $\Omega$ and in $\R^N$, respectively.

\begin{remark}\label{remark:20220524-r1}
Compared with $|u|^{2^*-2}u$, $u\log u^2$ is a lower-order term at infinity. However,
we note that the situation we considered in present paper is not covered above. Indeed,
in the Eq. \eqref{1.1},
$f(x, u)=\lambda u+\mu u\log {u}^{2}$. So $(f_1)$ fails due to the fact of $\lim\limits_{u\to 0^+}\frac{u\log u^2}{u}=-\infty$. That is $\lambda u=o(\mu u\log {u}^{2})$ for $u$ close to $0$. So it is natural to believe that $\mu u\log u^2$ has much  more influence than $\lambda u$ on the existence of positive solutions to Eq.\eqref{1.1}. Hence, our main goal in present paper is to make clear this guess.
\end{remark}



 To find a positive solution to Eq.\eqref{1.1}, we define a modified functional:
     \begin{equation}\label{fun1}
       I(u)=\frac{1}{2}\int_{\Omega}\left|\nabla u\right|^{2}dx-\frac{1}{2^\ast}\int\left|u_+\right|^{2^\ast}dx
       -\frac{\lambda}{2}\int u_+^2dx-\frac{\mu}{2}\int{u^2_{+}}(\log u^2_{+}-1)dx,~u\in H^1_0(\Omega),
     \end{equation}
     which can be rewritten by
     \begin{equation}\label{fun2}
       I(u)=\frac{1}{2}\int_{\Omega}\left|\nabla u\right|^{2}dx-\frac{1}{2^\ast}\int\left|u_+\right|^{2^\ast}dx
       -\frac{\mu}{2}\int{u^2_{+}}(\log u^2_{+}+\frac{\lambda}{\mu}-1)dx,~u\in H^1_0(\Omega),
     \end{equation}%
     where $u_{+}=\max\{u,0\},\,u_{-}=-\max\{-u,0\}$. It is easy to see that $I$ is well-defined in $H_0^1(\Omega)$ and any nonnegative critical point of $I$ corresponds to a solution of Eq.\eqref{1.1}.

Before stating our results, we introduce some notations.
Hereafter, we use $\int$ to denote $\int_\Omega~\mathrm{d}x$, unless specifically stated,    and let $S$ and $\lambda_1(\Omega)$ be the best Sobolev constant of the embedding  $H^1(\R^N) \hookrightarrow L^{2^*}(\R^N)$ and the first eigenvalue of $-\Delta $ with zero Dirichlet boundary value respectively,  i.e,
$$S:=\inf\limits_{u\in H^1(\R^N)\setminus\{0\}}\frac{\int_{\R^N}|\nabla u|^2~\mathrm{d}x}{(\int_{\R^N}|u|^{2^*}~\mathrm{d}x)^\frac{2}{2^*}}$$
and
$$\lambda_1(\Omega):=\inf\limits_{u\in H^1_0(\Omega)\setminus\{0\}}\frac{\int_{\Omega}|\nabla u|^2~\mathrm{d}x}{\int_{\Omega}|u|^2~\mathrm{d}x}.$$
We also set
$$\left\|v\right\|^2:=\int|\nabla v|^2,\,~~v\in {H}_{0}^{1}(\Omega),$$

$$\mathcal{N}:= \left\{u\in H^1_0(\Omega)\setminus\{0\}\ \ |\ \  g(u)=0\right\},$$

and
\begin{equation}\label{bu1.7}
c_g:=\inf\limits_{u\in \mathcal{N}} I(u),~~~~ c_M:=\inf\limits_{\gamma\in \Gamma}\max\limits_{t\in [0,1]}I(\gamma(t)),
\end{equation}
where
$$g(u):=\int\left|\nabla u\right|^{2}-\int\left|u_+\right|^{2^\ast}
       -\lambda\int u_+^2-\mu\int{u^2_{+}}\log u^2_{+},$$
and
$$\Gamma:=\{\gamma\in C([0,1], H^1_0(\Omega))\ \ |\ \ \gamma(0)= 0, I(\gamma(1))<0\}.$$

Let
\begin{align*}
A_0:=&\left\{(\lambda, \mu)\ \ |\ \  \lambda\in \R, \mu>0\right\},\\
B_0:=&\left\{(\lambda, \mu) \  \ |\ \ \lambda \in [0,\lambda_1(\Omega)),  \mu<0,\frac{1}{N}\left(\frac{\lambda_1(\Omega)-\lambda}{\lambda_1(\Omega)}\right)^\frac{N}{2}S^\frac{N}{2}+\frac{\mu}{2}|\Omega|>0\right\},\\
C_0:=&\left\{ (\lambda, \mu)\ \ |\ \ \lambda \in \R,  \mu<0, \frac{1}{N}S^\frac{N}{2}+\frac{\mu}{2}e^{-\frac{\lambda}{\mu }}|\Omega|>0\right\}.
\end{align*}

Here comes our main results.

\begin{theorem}\label{th1}
If $(\lambda, \mu)\in A_0$ and $N\geq 4$, then  problem \eqref{1.1} has a positive Mountain pass solution, which is also a ground state solution.
\end{theorem}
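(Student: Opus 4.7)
The plan is to obtain the positive solution by applying the Mountain Pass Theorem to the functional $I$ from \eqref{fun1}, combined with a concentration-compactness argument below the critical threshold $\frac{1}{N}S^{N/2}$, and then to identify the mountain pass value $c_M$ with the Nehari ground state level $c_g$. First I verify the mountain pass geometry. For the local behaviour near $0$: since $\mu > 0$, the term $-\frac{\mu}{2}\int u_+^2 \log u_+^2$ provides a positive contribution on $\{u_+ < 1\}$ of strictly larger order than $u_+^2$; rescaling $u = \rho v$ with $\|v\|=1$ and using boundedness of $\int v_+^2 \log v_+^2$ on $\{\|v\|=1\}$ (via Sobolev embedding together with a splitting at $v_+ = 1$), one shows $I(u) \ge \alpha > 0$ on $\{\|u\|=\rho\}$ for $\rho$ small, regardless of the sign of $\lambda$. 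For the far endpoint, choose any $0 \le \phi \not\equiv 0$; the $-\frac{t^{2^*}}{2^*}\int \phi^{2^*}$ term dominates as $t \to \infty$, giving $I(t\phi) \to -\infty$. The Mountain Pass Theorem then yields a Palais--Smale sequence $\{u_n\}$ at level $c_M \ge \alpha > 0$.

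The main obstacle is the critical-level estimate $c_M < \frac{1}{N} S^{N/2}$. I use the truncated Aubin--Talenti bubble $u_\varepsilon = \chi\, U_{\varepsilon,x_0}$ where $U_{\varepsilon,x_0}(x) = [N(N-2)\varepsilon^2]^{(N-2)/4}(\varepsilon^2+|x-x_0|^2)^{-(N-2)/2}$, $x_0 \in \Omega$ is an interior point, and $\chi \in C_c^\infty(\Omega)$ is a smooth cut-off equal to $1$ near $x_0$. Standard expansions give $\sup_{t\ge 0}[\frac{t^2}{2}\|u_\varepsilon\|^2 - \frac{t^{2^*}}{2^*}\|u_\varepsilon\|_{2^*}^{2^*}] = \frac{1}{N}S^{N/2} + O(\varepsilon^{N-2})$, and $\int u_\varepsilon^2 = O(\varepsilon^2)$ for $N \ge 5$ (resp. $O(\varepsilon^2|\log\varepsilon|)$ for $N = 4$). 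The decisive new ingredient is the asymptotic of $\int u_\varepsilon^2 \log u_\varepsilon^2$: a careful change of variables $x = x_0 + \varepsilon y$ separating the leading logarithmic behaviour of $\log U_\varepsilon$ at the concentration point yields $\int u_\varepsilon^2 \log u_\varepsilon^2 \sim C \varepsilon^2 |\log\varepsilon|$ with $C > 0$ for $N \ge 5$; for $N = 4$ the two formally leading $|\log\varepsilon|^2$ contributions cancel, but a positive leading term survives at order $\varepsilon^2|\log\varepsilon|$. Hence $-\frac{\mu}{2}\int u_\varepsilon^2 \log u_\varepsilon^2$ is negative of strictly larger order than every positive correction, so $\sup_{t\ge 0} I(tu_\varepsilon) < \frac{1}{N}S^{N/2}$ for $\varepsilon$ small. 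Extending the path $t \mapsto tu_\varepsilon$ to $[0,T]$ with $T$ large (so $I(Tu_\varepsilon) < 0$) gives an admissible $\gamma \in \Gamma$, and therefore $c_M < \frac{1}{N}S^{N/2}$.

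For compactness and positivity: the identity $I(u_n) - \frac{1}{2}\langle I'(u_n), u_n\rangle = (\frac{1}{2}-\frac{1}{2^*})\int (u_n)_+^{2^*} + \frac{\mu}{2}\int(u_n)_+^2$, both summands non-negative since $\mu > 0$, yields uniform bounds on $\|(u_n)_+\|_{L^{2^*}}$ and $\|(u_n)_+\|_{L^2}$; controlling $|\int (u_n)_+^2 \log(u_n)_+^2|$ by Sobolev interpolation (split at $(u_n)_+ = 1$) and using $\langle I'(u_n), u_n\rangle = o(\|u_n\|)$ then yields $H^1_0$-boundedness of $\{u_n\}$. Weak convergence $u_n \rightharpoonup u$, combined with a Brezis--Lieb / Lions concentration-compactness argument and the threshold $c_M < \frac{1}{N}S^{N/2}$, excludes bubbling and forces $u_n \to u$ strongly in $H^1_0(\Omega)$; hence $u$ is a nontrivial critical point of $I$ with $I(u) = c_M$. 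Testing $I'(u)$ against $u_-$ gives $\|u_-\| = 0$ (the log nonlinearity vanishes where $u_+ = 0$), so $u \ge 0$; elliptic bootstrapping and the strong maximum principle applied to $-\Delta u = u^{2^*-1} + (\lambda+\mu\log u^2)u$ then yield $u > 0$ in $\Omega$.

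Finally, for the ground state identification, fix $v \in H^1_0(\Omega)$ with $v_+ \not\equiv 0$ and set $\varphi(t) := I(tv)$. A direct computation gives $\frac{d}{dt}[\varphi'(t)/t] = -(2^*-2)t^{2^*-3}\int v_+^{2^*} - \frac{2\mu}{t}\int v_+^2 < 0$ for all $t > 0$, so $\varphi'(t)/t$ is strictly decreasing from $+\infty$ to $-\infty$ (the blow-up as $t \to 0^+$ coming from the $-2\mu\log t$ term, which requires $\mu > 0$). Hence $\varphi$ has a unique positive critical point $t_v$, at which it attains its global maximum, and $t_v v \in \mathcal{N}$. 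The path $t\mapsto tv$, $t\in[0,T]$ with $T$ large, lies in $\Gamma$ and gives $c_M \le I(t_v v)$; taking the infimum over $v$ yields $c_M \le c_g$. Conversely, the solution $u$ constructed above lies in $\mathcal{N}$ with $I(u) = c_M$, so $c_g \le c_M$; therefore $c_M = c_g$ and the mountain pass solution is a ground state.
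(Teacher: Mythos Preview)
Your overall strategy matches the paper's almost exactly: mountain pass geometry, energy estimate via truncated Aubin--Talenti bubbles with the logarithmic term providing the crucial negative correction, compactness below the threshold $\frac{1}{N}S^{N/2}$, and identification of $c_M$ with the Nehari level $c_g$ via monotonicity of $t\mapsto \varphi'(t)/t$. The boundedness argument for Palais--Smale sequences is in fact slightly cleaner than the paper's (which invokes the logarithmic Sobolev inequality), and the Nehari identification is spelled out more explicitly than in the paper, which simply cites Willem.

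There is, however, one genuine gap: the invocation of ``the strong maximum principle'' to pass from $u\ge 0$, $u\not\equiv 0$ to $u>0$. The standard Hopf-type strong maximum principle does not apply to $-\Delta u = u^{2^*-1} + \lambda u + \mu u\log u^2$ because the nonlinearity is not Lipschitz at $0$; indeed $(\lambda + \mu\log u^2)\to -\infty$ as $u\to 0^+$, so near a putative interior zero one has $-\Delta u \le 0$ (for $\mu>0$), and subharmonicity gives no contradiction. The paper handles this via V\'azquez's strong maximum principle: one checks that $\Delta u \le \beta(u)$ near zeros with $\beta(s)=\frac{3|\mu|}{2}|s\log s^2|$, and the key non-Lipschitz integrability condition $\int_0^{a}(s\beta(s))^{-1/2}\,ds=+\infty$ holds precisely because the logarithm is ``mild enough''. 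You should either invoke V\'azquez's theorem explicitly or supply an equivalent argument. A secondary remark: in dimension $N=4$ the sign of the leading $\varepsilon^2|\log\varepsilon|$ coefficient of $\int U_\varepsilon^2\log U_\varepsilon^2$ is not automatic but depends on the cutoff radius; the paper chooses $\rho$ small to force it positive, a point your sketch elides.
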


%

 Denote $f(s):=|s|^{2^*-2}s+\lambda s+\mu s\log s^2$ which is of odd. It is easy to see that $\mathcal{N}\neq \emptyset $ and $c_M\geq c_g$ if problem \eqref{1.1} has a positive mountain pass solution.
On the other hand,  when $\lambda\in \R$ and $ \mu>0$, $\frac{f(s)}{s}$ is strictly increasing in $(0, +\infty)$ and strictly decreasing in $(-\infty, 0)$, which enable one to show that
$c_M\leq c_g$ (See \cite[Theorem 4.2]{wil}). Therefore, the ground state energy $c_g$ equals to the Mountain pass level energy $c_M$, which implies that the mountain pass solution must be a ground state solution. So, in Theorem \ref{th1}, we only need to show that problem \eqref{1.1} has a positive mountain pass solution.

The case of $\mu <0$ is thorny.  Indeed for $(\lambda, \mu)\in B_0\cup C_0$, $I(u)$ still has the mountain pass geometry (See Lemma \ref{Lemma1}). However,  in such a case, it holds that $c_g<c_M$.
Since we can not check the $(PS)_{c_M}$ condition for $I(u)$, we apply the mountain pass theorem without $(PS)_{c_M}$ condition to gain a positive solution for Eq.\eqref{1.1} when $(\lambda, \mu)\in B_0\cup C_0$. However, we don't know whether this solution is of mountain pass type or not.

\begin{theorem}\label{th2}
Problem \eqref{1.1} possesses a positive solution provided one of the following condition holds:
 \begin{itemize}
 \item[(i)]$N=3$, $(\lambda, \mu)\in B_0\cup C_0$;
 \item[(ii)]$N=4$, $(\lambda, \mu)\in B_0\cup C_0$ with $\frac{32 e^\frac{\lambda}{\mu}}{\rho_{max}^2}< 1$,where $\rho_{max}:=\sup\{r>0: \exists x\in \Omega ~s.t. ~B(x,r)\subset \Omega\}$.
 \end{itemize}
\end{theorem}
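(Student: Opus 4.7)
The plan is to apply a mountain pass theorem that requires only the mountain pass geometry (established in Lemma \ref{Lemma1}) to produce a Palais--Smale sequence at level $c_M$, and then to extract a positive weak solution via a sharp energy bound $c_M<\tfrac{1}{N}S^{N/2}$ that restores compactness. First, the MPT without $(PS)$ delivers $\{u_n\}\subset H^1_0(\Om)$ with $I(u_n)\to c_M$ and $I'(u_n)\to 0$ in $H^{-1}(\Om)$. Boundedness of $\{u_n\}$ follows from the identity
\[
I(u_n)-\tfrac{1}{2}\langle I'(u_n),u_n\rangle=\tfrac{1}{N}\int (u_n)_+^{2^*}+\tfrac{\mu}{2}\int (u_n)_+^2,
\]
combined with the interpolation $\int (u_n)_+^2\le |\Om|^{2/N}\bigl(\int(u_n)_+^{2^*}\bigr)^{(N-2)/N}$ (noting $(N-2)/N<1$): this controls $\int(u_n)_+^{2^*}$ in spite of $\mu<0$, and then $\|u_n\|^2$ by reading off $I(u_n)$ after dominating $|s^2\log s^2|\le C+Cs^{2^*}$.

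The crux is the strict energy bound $c_M<\tfrac{1}{N}S^{N/2}$, below which standard concentration--compactness recovers $(PS)_{c_M}$. For $(\la,\mu)\in B_0$ the natural test function is the positive first Dirichlet eigenfunction $\var_1$ of $-\Delta$ on $\Om$; a direct computation of $\sup_{t\ge 0}I(t\var_1)$ shows that positivity of $\tfrac{1}{N}\bigl(\tfrac{\la_1(\Om)-\la}{\la_1(\Om)}\bigr)^{N/2}S^{N/2}+\tfrac{\mu}{2}|\Om|$ is exactly what keeps the mountain pass under the threshold. For $(\la,\mu)\in C_0$ I would use instead a truncated Aubin--Talenti instanton $U_{\va,x_0}\eta$ concentrating at some $x_0\in\Om$ inside a ball of radius close to $\rho_{max}$; a careful expansion isolates $\tfrac{1}{N}S^{N/2}$ from the bubble's concentration energy and $\tfrac{\mu}{2}e^{-\la/\mu}|\Om|$ from the bulk (the value $e^{-\la/(2\mu)}$ being the critical point of the density $-\tfrac{\la}{2}s^2-\tfrac{\mu}{2}s^2(\log s^2-1)$). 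The dimensional restriction $N=3,4$ enters because only in these dimensions is $\int U_\va^2$ controlled (bounded for $N=3$, logarithmic for $N=4$), so the sub-leading bubble corrections remain negligible next to the log-bulk term; the smallness $\tfrac{32\,e^{\la/\mu}}{\rho_{max}^2}<1$ in the $N=4$ case is precisely what absorbs the $\va^2|\log\va|$ correction.

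With this energy bound in hand, up to a subsequence $u_n\rightharpoonup u$ in $H^1_0(\Om)$, $u_n\to u$ strongly in $L^p(\Om)$ for every $p<2^*$, and a.e. Since $|u_+\log u_+^2|\le C\,u_+^{1+\eta}+C$ for any small $\eta>0$, the logarithmic nonlinearity converges strongly in $H^{-1}$, so $u$ is a weak solution. Lions's second concentration--compactness principle then excludes Dirac concentration and loss of mass, because any concentrated atom would carry at least $\tfrac{1}{N}S^{N/2}$ of energy; hence $u_n\to u$ strongly in $L^{2^*}(\Om)$, $I(u)=c_M>0$, and so $u\not\equiv 0$. Testing with $u_-$ yields $u\ge 0$; elliptic regularity (via a regularization of $s\mapsto s\log s^2$ near $0$ to cope with its non-Lipschitz behavior) and the strong maximum principle deliver $u>0$ in $\Om$.

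The hard part is the energy estimate above. In contrast to the classical Br\'ezis--Nirenberg setting, when $\mu<0$ the concentration integral $-\tfrac{\mu}{2}\int U_\va^2\log U_\va^2$ is \emph{positive} near the concentration point and \emph{raises} the mountain pass level, so the energy gain must come entirely from the spread-out, bulk part of the test function --- either the eigenfunction $\var_1$ or the plateau value $e^{-\la/(2\mu)}$. The sets $B_0$ and $C_0$ are precisely the sharp regions where this trade-off can still be won in dimensions $N=3,4$; outside them the argument breaks down, which is one of the genuinely new phenomena induced by the logarithmic perturbation.
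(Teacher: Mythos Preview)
Your overall architecture (mountain pass geometry $\Rightarrow$ Palais--Smale sequence $\Rightarrow$ boundedness $\Rightarrow$ energy bound $c_M<\tfrac{1}{N}S^{N/2}$ $\Rightarrow$ nonzero weak limit $\Rightarrow$ positivity) matches the paper, but the heart of the argument --- the energy estimate --- is misconceived.

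First, you have inverted the role of the sets $B_0$ and $C_0$. The inequalities defining $B_0$ and $C_0$ are used in Lemma~\ref{Lemma1} only to guarantee the mountain pass \emph{lower} bound $I|_{\|u\|=\rho}\ge\alpha>0$ (hence $c_M>0$); they play no role whatsoever in proving $c_M<\tfrac{1}{N}S^{N/2}$. Consequently your plan to use the first eigenfunction $\varphi_1$ as test function for $B_0$ cannot work: $\sup_{t\ge 0}I(t\varphi_1)$ has no relation to $S^{N/2}$, and the positivity of $\tfrac{1}{N}\bigl(\tfrac{\lambda_1-\lambda}{\lambda_1}\bigr)^{N/2}S^{N/2}+\tfrac{\mu}{2}|\Omega|$ does not bound it from above. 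Likewise, for $C_0$ there is no ``bulk'' contribution $\tfrac{\mu}{2}e^{-\lambda/\mu}|\Omega|$ in the upper estimate; that quantity appears only in the geometry lower bound.

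Second, the paper obtains $c_M<\tfrac{1}{N}S^{N/2}$ for \emph{all} $(\lambda,\mu)\in B_0\cup C_0$ using the truncated Aubin--Talenti bubble $U_\epsilon$, and your intuition about the sign of the logarithmic correction is backwards in the decisive case $N=3$. A direct computation (Lemma~\ref{lm3.6}) gives $\int U_\epsilon^2\log U_\epsilon^2=C\epsilon\log\epsilon+O(\epsilon)$ with $C>0$, which is \emph{negative} as $\epsilon\to 0^+$; hence for $\mu<0$ the term $-\tfrac{\mu}{2}t_\epsilon^2\int U_\epsilon^2\log U_\epsilon^2$ is negative and \emph{lowers} the energy below $\tfrac{1}{3}S^{3/2}$, with no extra hypothesis needed. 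For $N=4$ the leading $\epsilon^2(\log\tfrac{1}{\epsilon})^2$ contributions cancel and the surviving $\epsilon^2\log\tfrac{1}{\epsilon}$ coefficient is $-4\mu\omega_4\log\bigl(\tfrac{32e^{\lambda/\mu}}{\rho^2}\bigr)$; this is negative precisely when $\tfrac{32e^{\lambda/\mu}}{\rho_{\max}^2}<1$, which is where that condition actually enters.

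Finally, the paper does \emph{not} recover full $(PS)_{c_M}$ compactness when $\mu<0$: for a critical point $u$ one only has $I(u)=\tfrac{1}{N}\int u_+^{2^*}+\tfrac{\mu}{2}\int u_+^2$, which need not be nonnegative, so one cannot rule out $k\ge S^{N/2}$ as in Lemma~\ref{lm2.4}. The paper instead argues by contradiction (Lemma~\ref{lm2.41}) that if the weak limit were zero then $c_M=\tfrac{1}{N}k\ge\tfrac{1}{N}S^{N/2}$, contradicting the energy bound; this yields a nontrivial nonnegative weak solution, but not $I(u)=c_M$, and the paper explicitly leaves open whether the solution is of mountain pass type.
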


For the nonexistence of positive solutions for problem \eqref{1.1}, we have the following partial result.
\begin{theorem}\label{buth1}
Assume that $N\geq 3.$ If $\mu<0$ and $-\frac{(N-2)\mu}{2}+\frac{(N-2)\mu}{2}\log(-\frac{(N-2)\mu}{2})+\lambda-\lambda_1(\Omega)\geq 0$, then problem \eqref{1.1}
has no positive solutions.
\end{theorem}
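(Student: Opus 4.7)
My plan is to test the equation against the positive first Dirichlet eigenfunction $\phi_{1}>0$ of $-\Delta$ on $\Omega$ and then match the hypothesis with a pointwise positivity statement for a one-variable auxiliary function. Suppose, for contradiction, that $u\in H_{0}^{1}(\Omega)$ is a positive solution of \eqref{1.1}. Standard elliptic regularity (Brezis--Kato iteration to obtain $L^{\infty}$ in the presence of the critical term, then bootstrap, together with the fact that $t\log t^{2}\to 0$ as $t\to 0^{+}$) yields $u\in C^{2}(\Omega)\cap C(\overline{\Omega})$ with $u>0$ in $\Omega$ via the strong maximum principle. These regularity facts ensure that all integrals appearing below make sense.

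Testing \eqref{1.1} with $\phi_{1}$ and integrating by parts twice, using $-\Delta\phi_{1}=\lambda_{1}(\Omega)\phi_{1}$ and the Dirichlet condition, I obtain
\begin{equation*}
\int_{\Omega}\bigl[u^{2^{*}-1}+\lambda u+\mu u\log u^{2}-\lambda_{1}(\Omega)u\bigr]\phi_{1}\,dx=0.
\end{equation*}
Because $u>0$ on $\Omega$, this factors as $\int_{\Omega}u\,h(u)\,\phi_{1}\,dx=0$ with
\begin{equation*}
h(s):=s^{\frac{4}{N-2}}+2\mu\log s+\lambda-\lambda_{1}(\Omega),\qquad s>0.
\end{equation*}

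The main calculation is to locate the minimum of $h$ and check that it coincides with the quantity in the hypothesis. Since $\mu<0$, one has $h(s)\to+\infty$ as $s\to 0^{+}$ and as $s\to+\infty$, and $h'(s)=\tfrac{4}{N-2}s^{\frac{4}{N-2}-1}+\tfrac{2\mu}{s}$ vanishes at a unique $s_{0}>0$ with $s_{0}^{4/(N-2)}=-\tfrac{(N-2)\mu}{2}$. Substituting this into $h$ yields
\begin{equation*}
\min_{s>0}h(s)=h(s_{0})=-\frac{(N-2)\mu}{2}+\frac{(N-2)\mu}{2}\log\Bigl(-\frac{(N-2)\mu}{2}\Bigr)+\lambda-\lambda_{1}(\Omega),
\end{equation*}
which is exactly the expression assumed to be nonnegative. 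Hence $h\geq 0$ on $(0,+\infty)$.

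To conclude, the integrand $u\,h(u)\,\phi_{1}$ is nonnegative on $\Omega$ with $u\phi_{1}>0$ pointwise, so $\int_{\Omega}u\,h(u)\,\phi_{1}\,dx=0$ forces $h(u(x))\equiv 0$ on $\Omega$, i.e., $u\equiv s_{0}$ in $\Omega$. This contradicts the continuity of $u$ up to the boundary together with $u=0$ on $\partial\Omega$. The only step requiring genuine insight is the recognition that the hypothesis is precisely the statement $\min_{s>0}h(s)\geq 0$; everything else is a direct test-function computation plus a one-variable minimization.
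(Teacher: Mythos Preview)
Your argument is correct and coincides with the paper's proof: both test the equation against the first Dirichlet eigenfunction $\phi_1$, rewrite the resulting identity as $\int_\Omega u\,h(u)\,\phi_1=0$ with $h(s)=s^{2^*-2}+\mu\log s^2+\lambda-\lambda_1(\Omega)$, minimize $h$ over $(0,\infty)$ to identify the hypothesis as $\min h\ge 0$, and derive a contradiction from $u\equiv s_0$. The only cosmetic difference is that the paper rules out the constant solution via $u\in H_0^1(\Omega)$, whereas you invoke continuity up to the boundary; either suffices.
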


The existence and nonexistence results given by Theorem \ref {th1} - Theorem \ref {buth1} can be described on the $(\lambda, \ \mu)$ plane by Figure \ref {graph1}. The pink regions stand for the existence of positive solution, while the blue regions correspond the non-existence of positive solution. Here $\tau_1$, $\eta_1$, $\eta_2$ and $\eta_3$ are  curves given by
 \begin{align*}
	      & \tau_1: \ -\frac{(N-2)\mu}{2}+\frac{(N-2)\mu}{2}\log(-\frac{(N-2)\mu}{2})+\lambda-\lambda_1(\Omega) = 0,\\
&\eta_1:\ \frac{1}{N}(\frac{\lambda_1(\Omega)-\lambda}{\lambda_1(\Omega)})^\frac{N}{2}S^\frac{N}{2}+\frac{\mu}{2}|\Omega|=0,\\
	     &\eta_2:\frac{1}{N}S^\frac{N}{2}+\frac{\mu}{2}e^{-\frac{\lambda}{\mu }}|\Omega|=0,\\
	      &\eta_3:{32 e^\frac{\lambda}{\mu}}={\rho_{max}^2}, \ \ \ \rho_{max}:=\sup\{r\in (0, +\infty): \exists x\in \Omega ~s.t.~B(x,r)\subset \Omega\}.
	      \end{align*}

\begin{figure}[htbp]
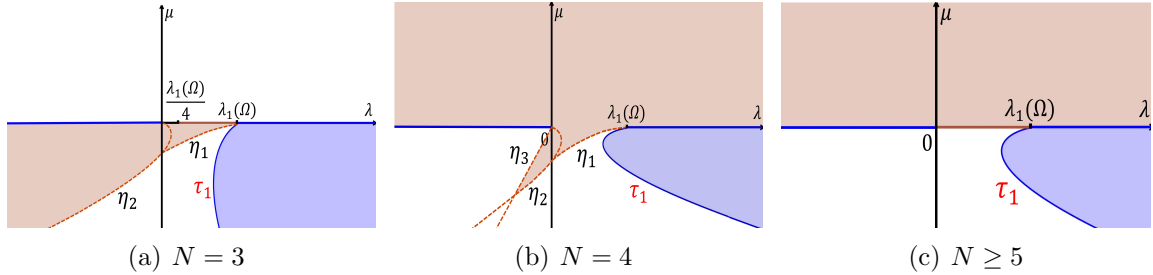
\label{graph1}
\centering
\subfigure[$N=3$]{
\begin{minipage}[t]{0.33\linewidth}
\centering
\includegraphics[height=3cm,width=4.9cm]{N3_1.png}
\end{minipage}%
}%
\subfigure[$N=4$]{
\begin{minipage}[t]{0.33\linewidth}
\centering
\includegraphics[height=3cm,width=4.9cm]{N4_1.png}
\end{minipage}%
}%
\subfigure[$N\ge5$]{
\begin{minipage}[t]{0.33\linewidth}
\centering
\includegraphics[height=3cm,width=4.9cm]{N5_1.png}
\end{minipage}
}
\centering
\caption{existence  and  nonexistence}
\end{figure}

\begin{remark}
 Comparing the results of \cite{bre2} and   FIGURE \ref {graph1} above, we find that for the case of $N\geq 4$: Eq.\eqref{1.1} possesses a positive solution only for $\lambda\in (0, \lambda_1(\Omega))$ if $\mu =0$. while it has a positive solution for all $\lambda\in \R$ if $\mu>0$. So we see that $\mu u\log u^2\ (\mu>0)$ really plays a leading role (compared with $\lambda u$) in the effect on the existence of positive solution to  Eq.\eqref{1.1}. A similar phenomenon occurs  for $N=3$: Eq.\eqref{1.1} has a positive solution  only for $\lambda\in (\lambda^*,\  \lambda_1(\Omega))\subset(0, \ \lambda_1(\Omega)) $ if $\mu=0$, while it has a positive solution for all $\lambda\in (-\infty,\lambda_1(\Omega))$ if $\mu<0$.
\end{remark}

   Before closing the introduction, we give the outline of our paper. In Section \ref{section:preliminary}, we will check the mountain pass geometry structure for $I(u)$, under different specific situations. We also give some other preliminaries.
    In  Section \ref{section:Estimations},  we are devoted to estimate the mountain pass level $c_M$ for different parameters $\lambda,\mu$ and $N$. The proofs of our main Theorems \ref{th1}, \ref{th2} and \ref{buth1} are given  in  Section \ref{section:proofs}.


   \section{Preliminaries}\label{section:preliminary}


   In this section,   firstly  we  verify the mountain pass geometry structure for $I(u)$ when $(\lambda,\mu)\in A_0\cup B_0\cup C_0$.  Secondly we show that $I(u)$ satisfies $(PS)_d$ condition provided $d<\frac {1}{N}S^{\frac {N}{2}}$. Finally, we deduce a existence result for Eq.(\ref{1.1}) when $c_M\in (-\infty, \ 0)\cup (0, \ \frac {1}{N}S^{\frac {N}{2}})$ and $(\lambda,\mu)\in B_0\cup C_0$.

    \begin{lemma}\label{Lemma1}
	  Assume that $N\geq 3$ and    $(\lambda,\mu)\in A_0\cup B_0\cup C_0$. Then the functional $I(u)$ satisfies the mountain pass geometry structure:	
\begin{itemize}
\item[$(i)$]there exist $\alpha, \rho>0$ such that $I(v)\ge\alpha$ for all $\left\|v\right\|=\rho$;
\item[$(ii)$]there exists $\omega\in H_{0}^{1}(\Omega)$ such that $\left\|\omega\right\|\ge\rho$ and $I(\omega)<0$.
\end{itemize}
    \end{lemma}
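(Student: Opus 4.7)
Plan: I would verify (ii) by a scaling argument that is essentially uniform across the three regions, and verify (i) by a case analysis on the sign of $\mu$.

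For (ii), fix any nonnegative nontrivial $\phi \in H_0^1(\Omega)$ and examine $I(t\phi)$ as $t \to +\infty$. A direct expansion gives
\begin{equation*}
I(t\phi) = \frac{t^{2}}{2}\|\phi\|^{2} - \frac{t^{2^{*}}}{2^{*}}\|\phi\|_{2^{*}}^{2^{*}} - \frac{\mu\log t^{2}}{2}\|\phi\|_{2}^{2}\,t^{2} + O(t^{2}),
\end{equation*}
and since $2^{*} > 2$ while $t^{2}\log t = o(t^{2^{*}})$, the critical term dominates and forces $I(t\phi) \to -\infty$. Selecting $\omega = t\phi$ for $t$ sufficiently large delivers (ii) uniformly across $A_{0}\cup B_{0}\cup C_{0}$.

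For (i), the basic tools are the Sobolev inequality $\|v_{+}\|_{2^{*}}^{2^{*}} \le S^{-2^{*}/2}\|v\|^{2^{*}}$, the Poincar\'e inequality $\|v_{+}\|_{2}^{2} \le \lambda_{1}(\Omega)^{-1}\|v\|^{2}$, the elementary bound $s^{2}\log s^{2} \le C_{\delta}s^{2+\delta}$ for $s \ge 1$ with $\delta \in (0, 2^{*}-2)$, and Jensen's inequality $\int u^{2}\log u^{2} \ge \|u\|_{2}^{2}\log(\|u\|_{2}^{2}/|\Omega|)$. In the regime $(\lambda,\mu)\in A_{0}$ (so $\mu > 0$), I scale $v = \rho w$ with $\|w\| = 1$ and take $\rho$ small. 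Using the decomposition
\begin{equation*}
-\frac{\mu}{2}\int v_{+}^{2}\log v_{+}^{2} = -\frac{\mu\rho^{2}\log\rho^{2}}{2}\|w_{+}\|_{2}^{2} - \frac{\mu\rho^{2}}{2}\int w_{+}^{2}\log w_{+}^{2},
\end{equation*}
the first summand contributes $+\tfrac{\mu\rho^{2}|\log\rho^{2}|}{2}\|w_{+}\|_{2}^{2} > 0$ (since $\log\rho^{2} < 0$ for $\rho < 1$ and $\mu > 0$), which dominates the linear $-\tfrac{\lambda}{2}\|v_{+}\|_{2}^{2}$ term for any $\lambda \in \mathbb{R}$ once $\rho$ is small enough, while the second summand is bounded uniformly in $w$ by Sobolev applied on $\{w_{+} > 1\}$. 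This yields $I(v) \ge \tfrac{1}{4}\|v\|^{2} - C\|v\|^{2+\delta} - C\|v\|^{2^{*}} > 0$ at some small $\|v\| = \rho$.

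The regime $(\lambda,\mu)\in B_{0}\cup C_{0}$ ($\mu < 0$) is the main obstacle. Here the logarithmic term is adversarial near the origin: along any ray $v = \epsilon\phi$, the contribution $-\tfrac{\mu}{2}\epsilon^{2}\log\epsilon^{2}\|\phi\|_{2}^{2} = -\tfrac{|\mu|}{2}\epsilon^{2}|\log\epsilon^{2}|\|\phi\|_{2}^{2}$ overwhelms $\tfrac{\epsilon^{2}}{2}$ for $\epsilon$ very small, forcing $I(\epsilon\phi) < 0$. Consequently $\rho$ cannot be taken arbitrarily small; instead I select $\rho$ at an intermediate scale. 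Scaling $v = \rho w$ with $\|w\| = 1$, using $\|w_{+}\|_{2}^{2} \le 1/\lambda_{1}(\Omega)$, and bounding $\int w_{+}^{2}\log w_{+}^{2}$ uniformly in direction (via Jensen from below and Sobolev from above), one obtains
\begin{equation*}
I(\rho w) \ge \frac{\rho^{2}}{2} - \frac{|\mu|\rho^{2}|\log\rho^{2}|}{2\lambda_{1}(\Omega)} - C_{\lambda,\mu}\rho^{2} - C\rho^{2^{*}}.
\end{equation*}
The quantitative constraints defining $B_{0}$ and $C_{0}$, which bound $|\mu|$ in terms of $\lambda_{1}(\Omega)$, $|\Omega|$, $S$, and $\lambda$, guarantee that the right-hand side is strictly positive on a nontrivial intermediate range of $\rho$, delivering the required $\alpha > 0$. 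The calibration of $\rho$ against these specific quantitative constraints, and the uniform handling over all $w$ on the unit sphere, are precisely where the technical difficulty lies.
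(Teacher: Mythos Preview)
Your argument for (ii) is fine and matches the paper. The difficulty is entirely in (i), and there the scaling decomposition $v=\rho w$ causes trouble in both regimes.

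\textbf{Case $A_0$.} Your claimed inequality $I(v)\ge \tfrac14\|v\|^2 - C\|v\|^{2+\delta}-C\|v\|^{2^*}$ is correct, but your argument does not prove it. After the split, the ``second summand'' $-\tfrac{\mu\rho^2}{2}\int w_+^2\log w_+^2$ is only bounded below by $-C\rho^2$ with a constant $C=C(\mu,\Omega,\delta)$ you cannot force to be smaller than $\tfrac12$; the positive first summand $\tfrac{\mu\rho^2|\log\rho^2|}{2}\|w_+\|_2^2$ cannot absorb it because $\|w_+\|_2^2$ may be arbitrarily small while $\int w_+^2\log w_+^2$ stays of order one. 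The paper avoids this by not scaling: it writes $\lambda u_+^2+\mu u_+^2\log u_+^2=\mu u_+^2\log(e^{\lambda/\mu}u_+^2)$, drops the region $\{e^{\lambda/\mu}u_+^2\le 1\}$ (harmless sign), and bounds the rest by $C|u_+|^{2^*}$, yielding $I(u)\ge \tfrac12\|u\|^2-C\|u\|^{2^*}$ directly.

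\textbf{Case $B_0\cup C_0$.} Here your lower bound
\[
I(\rho w)\ge \frac{\rho^2}{2}-\frac{|\mu|\rho^2|\log\rho^2|}{2\lambda_1(\Omega)}-C_{\lambda,\mu}\rho^2-C\rho^{2^*}
\]
is structurally too weak: the $|\log\rho^2|$ term is an artifact of separating $\log v_+^2=\log\rho^2+\log w_+^2$, and no choice of $\rho$ will make this expression match the \emph{specific} inequalities defining $B_0$ or $C_0$. The paper bounds the logarithmic piece by a $\rho$-independent constant: using $t\log t\ge -e^{-1}$ pointwise on $t=e^{-1}u_+^2$ (resp.\ $t=e^{\lambda/\mu-1}u_+^2$) gives
\[
-\frac{\mu}{2}\int u_+^2(\log u_+^2-1)\ge \frac{\mu}{2}|\Omega|\qquad\Bigl(\text{resp. } -\frac{\lambda}{2}\|u_+\|_2^2-\frac{\mu}{2}\int u_+^2(\log u_+^2-1)\ge \frac{\mu}{2}e^{-\lambda/\mu}|\Omega|\Bigr),
\]
so that $I(u)\ge a\|u\|^2-\tfrac{1}{2^*}S^{-2^*/2}\|u\|^{2^*}+c$ with $a=\tfrac{\lambda_1-\lambda}{2\lambda_1}$ (resp.\ $a=\tfrac12$) and $c=\tfrac{\mu}{2}|\Omega|$ (resp.\ $\tfrac{\mu}{2}e^{-\lambda/\mu}|\Omega|$). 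Maximizing in $\rho=\|u\|$ then produces \emph{exactly} $\tfrac1N\bigl(\tfrac{\lambda_1-\lambda}{\lambda_1}\bigr)^{N/2}S^{N/2}+\tfrac{\mu}{2}|\Omega|>0$ (resp.\ $\tfrac1N S^{N/2}+\tfrac{\mu}{2}e^{-\lambda/\mu}|\Omega|>0$), which are the defining inequalities of $B_0$ and $C_0$. Your Jensen bound on $w_+$ would recover the same constant only if applied to $v_+$ directly rather than after peeling off $\log\rho^2$; the scaling step is what destroys the sharp match.
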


	\begin{proof}
We divide the proof into three  cases.

\textbf{Case 1:} $(\lambda,\mu)\in A_0$.

Since    $\mu> 0$, it follows from the fact  $s^2log s^2\leq C s^{2^*}$ for all $s\in [1,+\infty)$ that
	   \begin{align*}
	      &\mu\int u^2_{+}(\log u^2_{+}+\frac{\lambda}{\mu}-1)
	      \le \mu\int u^2_{+}(\log u^2_{+}+\frac{\lambda}{\mu})
=\mu\int u^2_{+}\log (e^\frac{\lambda}{\mu}u^2_{+})\\
	      =&\mu\int _{\{e^\frac{\lambda}{\mu}u^2_{+}\ge 1\}}u^2_{+}\log (e^\frac{\lambda}{\mu}u^2_{+}) +\mu\int _{\{e^\frac{\lambda}{\mu}u^2_{+}\le1 \}}u^2_{+}\log (e^\frac{\lambda}{\mu}u^2_{+})\\
	      \le&\mu\int _{\{e^\frac{\lambda}{\mu}u^2_{+}\ge 1\}}u^2_{+}\log (e^\frac{\lambda}{\mu}u^2_{+})
	      \le C\mu\int _{\{e^\frac{\lambda}{\mu}u^2_{+}\ge 1\}}e^\frac{(2^*-2)\lambda}{2\mu}\left|u_{+}\right|^{2^\ast}\\
	      \le& C e^\frac{(2^*-2)\lambda}{2\mu}\mu\int\left|u\right|^{2^\ast}
	      \le Ce^\frac{(2^*-2)\lambda}{2\mu}\mu\left\|u\right\|^{2^\ast}.
       \end{align*}
      So
       \begin{align*}
          I(u)\ge\frac{1}{2}\left\|u\right\|^2-C_1\left\|u\right\|^{2\ast}
          -C_2\left\|u\right\|^{2^\ast} ~\hbox{for some $C_1,C_2>0$},
       \end{align*}
       which implies that there exist  $\alpha>0$ and $\rho>0$ such that
       $I(v)\ge\alpha>0$ for all $\left\|v\right\|=\rho$.

	   Let $0\leq\varphi \in H_{0}^{1}(\Omega)\setminus\{0\}$ be a fixed function, then
	   \begin{align*}  	      I(t\varphi)&=\frac{t^2}{2}\int\left|\nabla\varphi\right|^2-\frac{\left|t\right|^{2^\ast}}{2^\ast}\int\left|\varphi\right|^{2^\ast}-\frac{\mu}{2}t^2\int
\varphi^2(\log(t^2\varphi^2)+\frac{\lambda}{\mu}-1)\\
          &=\frac{t^2}{2}\int\left|\nabla\varphi\right|^2-\frac{\left|t\right|^{2^\ast}}{2^\ast}\int\left|\varphi\right|^{2^\ast}-\frac{\mu}{2}t^2\int
          \varphi^2(\log t^2+\log\varphi^2+\frac{\lambda}{\mu}-1)\\
          &=\frac{t^2}{2}\int\left|\nabla\varphi\right|^2-\frac{\left|t\right|^{2^\ast}}{2^\ast}\int\left|\varphi\right|^{2^\ast}-\frac{\mu}{2}t^2\log t^2\int\varphi^2-\frac{\mu}{2}t^2\int\varphi^2(\log\varphi^2+\frac{\lambda}{\mu}-1)\\
          &\to-\infty\quad as\quad  t\to+\infty,
       \end{align*}
       since $\lim\limits_{t\to +\infty}\frac{t^{2^\ast}}{t^2\log t^2}=+\infty$.
       Therefore, we can choose $t_{0}\in \R^+$ large enough such that
       \begin{align*}
       	   I(t_{0}\varphi)<0 \quad and\quad \left\|t_{0}\varphi \right\|>\rho .
       \end{align*}

     \textbf{ Case 2:} $(\lambda,\mu)\in B_0$.

Since  $\mu<0$, we have
	   \begin{align*}
	      &-\frac{\mu}{2}\int u^2_{+}(\log u^2_{+}-1)
=-\frac{\mu}{2}\int u^2_{+}\log(e^{-1} u^2_{+})\\
	      =&-\frac{\mu}{2}\int _{\{e^{-1} u^2_{+}\geq 1\}}u^2_{+}\log (e^{-1} u^2_{+}) -\frac{\mu}{2}\int _{\{e^{-1} u^2_{+}\leq 1 \}}u^2_{+}\log (e^{-1} u^2_{+}) \\
	      \ge&-\frac{\mu}{2}\int _{\{e^{-1} u^2_{+}\leq 1 \}}u^2_{+}\log (e^{-1} u^2_{+})\\
	      \ge& -\frac{\mu}{2}e\int _{\{e^{-1} u^2_{+}\leq 1 \}}-e^{-1}~ \mathrm{d}x  \ge \frac{\mu}{2}|\Omega|.
       \end{align*}
       It follows that
        \begin{equation}\label{bu5.71}
       I(u)\geq \frac{1}{2}\frac{\lambda_1(\Omega)-\lambda}{\lambda_1(\Omega)}\int |\nabla u|^2-\frac{1}{2^*}S^{-\frac{2^*}{2}}\left(\int |\nabla u|^2\right)^\frac{2^*}{2}
       +\frac{\mu}{2}|\Omega|.
              \end{equation}
 Put  $\alpha :=\frac{1}{N}\left(\frac{\lambda_1(\Omega)-\lambda}{\lambda_1(\Omega)}\right)^\frac{N}{2}S^\frac{N}{2}+\frac{\mu}{2}|\Omega|$ and $\rho :=\left(\frac{\lambda_1(\Omega)-\lambda}{\lambda_1(\Omega)}\right)^\frac{N-2}{4}S^\frac{N}{4}$, then $ \alpha >0$ and $\rho >0$ due to the fact $(\lambda, \mu)\in B_0$.  By (\ref {bu5.71}),
       $$I(v)\geq \frac{1}{2}\frac{\lambda_1(\Omega)-\lambda}{\lambda_1(\Omega)}\rho ^2-\frac{1}{2^*}S^{-\frac{2^*}{2}}\rho ^{2^*}
       +\frac{\mu}{2}|\Omega|=\frac{1}{N}\left(\frac{\lambda_1(\Omega)-\lambda}{\lambda_1(\Omega)}\right)^\frac{N}{2}S^\frac{N}{2}+\frac{\mu}{2}|\Omega|=\alpha >0$$
      for any $||v||=\rho $.
       Applying a similar argument as Case 1 above, we can find a
       function $w\in H^1_0(\Omega)$ such that  $\left\|\omega\right\|$$\ge$$\rho$ and $I(\omega)<0$.

\textbf{Case 3:} $(\lambda,\mu)\in C_0$.

Since  $\mu<0$, we have
	   \begin{align*}
	     & -\frac{\lambda}{2}\int u_{+}^2-\frac{\mu}{2}\int u^2_{+}\left(\log u^2_{+}-1\right)
  =-\frac{\mu}{2}\int u^2_{+}\left(\log u^2_{+}+\frac{\lambda}{\mu }-1\right)\\
&=-\frac{\mu}{2}\int u^2_{+}\log(e^{\frac{\lambda}{\mu }-1} u^2_{+})\\
	      &=-\frac{\mu}{2}\int _{\{e^{\frac{\lambda}{\mu }-1} u^2_{+}\geq 1\}}u^2_{+}\log \left(e^{\frac{\lambda}{\mu }-1} u^2_{+}\right) -\frac{\mu}{2}\int _{\{e^{\frac{\lambda}{\mu }-1} u^2_{+}\leq 1 \}}u^2_{+}\log \left(e^{\frac{\lambda}{\mu }-1} u^2_{+}\right) \\
	      &\ge-\frac{\mu}{2}\int _{\{e^{\frac{\lambda}{\mu }-1} u^2_{+}\leq 1 \}}u^2_{+}\log \left(e^{\frac{\lambda}{\mu }-1} u^2_{+}\right)\\
	      &\ge -\frac{\mu}{2}e^{1-\frac{\lambda}{\mu }}\int _{\{e^{\frac{\lambda}{\mu }-1} u^2_{+}\leq 1 \}}-e^{-1}~ \mathrm{d}x
	      \ge \frac{\mu}{2}e^{-\frac{\lambda}{\mu }}|\Omega|.
       \end{align*}
       It follows that
        \begin{equation}\label{bubu5.71}
       I(u)\geq \frac{1}{2}\int |\nabla u|^2-\frac{1}{2^*}S^{-\frac{2^*}{2}}\left(\int |\nabla u|^2\right)^\frac{2^*}{2}
       +\frac{\mu}{2}e^{-\frac{\lambda}{\mu }}|\Omega|.
              \end{equation}
 Put  $\alpha:=\frac{1}{N}S^\frac{N}{2}+\frac{\mu}{2}e^{-\frac{\lambda}{\mu }}|\Omega|$ and $ \rho :=S^\frac{N}{4}$, then $ \alpha>0$ due to that $(\lambda, \mu)\in C_0$. By \eqref{bubu5.71},
       $$I(v)\geq \frac{1}{2} \rho^2-\frac{1}{2^*}S^{-\frac{2^*}{2}}\rho^{2^*}
       +\frac{\mu}{2}e^{-\frac{\lambda}{\mu }}|\Omega|=\frac{1}{N}S^\frac{N}{2}+\frac{\mu}{2}e^{-\frac{\lambda}{\mu }}|\Omega|=\alpha>0$$
        for any $||v||=\rho$.
       Similarly, it is not hard to find a
       function $w\in H^1_0(\Omega)$ such that  $\left\|\omega\right\|$$\ge$$\rho$ and $I(\omega)<0$.
  	\end{proof}


    \begin{lemma}\label{lm2.3} Assume that $N\geq 3, \lambda\in \R$ and $\mu\in \R\setminus\{0\}$.  Then
	   any  $(PS)_{d}$ sequence $\{u_{n}\}$ of $I$ must be    bounded   in $H^1_{0}(\Omega)$ for all $d\in \R$.
    \end{lemma}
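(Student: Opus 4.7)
\emph{Proof plan.} The plan is to combine $I(u_n)=d+o(1)$ with $\langle I'(u_n),u_n\rangle=o(\|u_n\|)$ so as to isolate $\|u_n\|^2$, and then to control the remaining logarithmic term by elementary algebraic inequalities for $t\mapsto t^2\log t^2$ whose form depends only on the sign of $\mu$. First, testing $I'(u_n)$ against $u_{n-}$, and noting that every term of $I'(u_n)$ other than $-\Delta u_n$ involves only $u_{n+}$, one obtains $\|u_{n-}\|^2=\langle I'(u_n),u_{n-}\rangle=o(\|u_{n-}\|)$, so that $u_{n-}\to 0$ in $H^1_0(\Omega)$ and it suffices to bound $\|u_{n+}\|$. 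The algebraic identity
\[
I(u_n)-\frac{1}{2^*}\langle I'(u_n),u_n\rangle=\frac{1}{N}\|u_n\|^2+\Big(\frac{\mu}{2}-\frac{\lambda}{N}\Big)\int u_{n+}^2-\frac{\mu}{N}\int u_{n+}^2\log u_{n+}^2,
\]
which eliminates the critical term, then yields
\[
\|u_n\|^2=Nd+\Big(\lambda-\frac{N\mu}{2}\Big)\int u_{n+}^2+\mu\int u_{n+}^2\log u_{n+}^2+o(\|u_n\|)+o(1). \qquad(\star)
\]

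The key algebraic tool is the one-parameter family of lower bounds
\[
t^2\log t^2\ge K t^2-e^{K-1}\qquad \text{for every }t\ge 0\text{ and every }K\in\R,
\]
obtained by minimising $t\mapsto t^2(\log t^2-K)$ on $[0,\infty)$, whose infimum $-e^{K-1}$ is attained at $t=e^{(K-1)/2}$. Integration gives $\int u_{n+}^2\log u_{n+}^2\ge K\int u_{n+}^2-e^{K-1}|\Omega|$. A complementary upper bound, used when $\mu>0$, is $t^2\log t^2\le \va t^{2^*}+C_\va$ for any $\va>0$, arising from $\lim_{t\to\infty}t^2\log t^2/t^{2^*}=0$.

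For the case $\mu<0$, I would insert the lower bound into $(\star)$ with $K$ chosen so that $\lambda+\mu(K-\tfrac{N}{2})\le 0$, e.g.\ $K=\tfrac{N}{2}+\max\{\lambda/|\mu|,0\}$; the coefficient of $\int u_{n+}^2$ on the right-hand side then becomes non-positive and $(\star)$ collapses to $\|u_n\|^2\le C+o(\|u_n\|)$. For $\mu>0$, I would first extract from the companion identity $I(u_n)-\tfrac{1}{2}\langle I'(u_n),u_n\rangle=\tfrac{1}{N}\int u_{n+}^{2^*}+\tfrac{\mu}{2}\int u_{n+}^2=d+o(1)+o(\|u_n\|)$ the estimates $\int u_{n+}^{2^*}\le C+o(\|u_n\|)$ and $\int u_{n+}^2\le C+o(\|u_n\|)$, and then apply the upper bound $t^2\log t^2\le\va t^{2^*}+C_\va$ to tame the logarithmic term in $(\star)$. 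In either case $(\star)$ reduces to $\|u_n\|^2\le C+o(\|u_n\|)=C+\va_n\|u_n\|$ with $\va_n\to 0$, which forces $\{\|u_n\|\}$ to be bounded.

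The principal obstacle is the indefinite sign and super-quadratic behaviour of $t^2\log t^2$: any single combination of $I$ and $I'$ leaves a logarithmic residue, and a plain Sobolev embedding alone controls only polynomial growth. The resolution is the flexibility of the free parameter $K$ in $t^2\log t^2\ge Kt^2-e^{K-1}$; tuning $K$ to the data $(\lambda,\mu,N)$ absorbs the $(\lambda-\tfrac{N\mu}{2})\int u_{n+}^2$ contribution in $(\star)$ when $\mu<0$, whereas when $\mu>0$ one exploits the positive-sign structure provided by the companion identity. Both arguments succeed without any restriction on $\lambda$ or on $d$.
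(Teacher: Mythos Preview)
Your proposal is correct, and for $\mu<0$ it coincides with the paper's argument: the paper also combines $I(u_n)-\tfrac{1}{2^*}\langle I'(u_n),u_n\rangle$ with the elementary bound $s\log s\ge -e^{-1}$ after a shift, which is exactly your inequality $t^2\log t^2\ge Kt^2-e^{K-1}$ with the specific choice $K=\tfrac{N}{2}-\tfrac{\lambda}{\mu}$.

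For $\mu>0$, however, you take a genuinely different and more elementary route. The paper first obtains $\int (u_n)_+^2\le C+C\|u_n\|$ from $I(u_n)-\tfrac12\langle I'(u_n),u_n\rangle$ and then invokes the \emph{logarithmic Sobolev inequality} $\int u^2\log u^2\le \tfrac{a}{\pi}\|u\|^2+(\log|u|_2^2-N(1+\log a))|u|_2^2$, choosing $a$ small so that the $\|u_n\|^2$ contribution can be absorbed; this produces auxiliary terms like $|(u_n)_+|_2^{2\pm\delta}$ that must in turn be estimated. Your argument instead observes that the same companion identity already controls $\int (u_n)_+^{2^*}$ (both summands being nonnegative when $\mu>0$), and then the crude pointwise bound $t^2\log t^2\le C t^{2^*}+C$ closes the estimate immediately. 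This avoids the logarithmic Sobolev inequality entirely and is both shorter and self-contained; the paper's approach, on the other hand, has the advantage of using only the $L^2$ bound on $(u_n)_+$ and would adapt more readily to settings where a critical-power bound is unavailable. Your preliminary observation that $\|u_{n-}\|^2=\langle I'(u_n),u_{n-}\rangle=o(\|u_{n-}\|)$ forces $u_{n-}\to0$ is correct but not actually needed, since your identity $(\star)$ already involves the full norm $\|u_n\|$.
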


    \begin{proof}
       By the definition of the  $(PS)_{d} $ sequence, we have that, as $n \to +\infty,$
       $$I(u_{n}) \to d~~\hbox{and}~~I^{'}(u_{n}) \to 0~\hbox{in}~H^{-1}(\Omega).$$
That is,
\begin{equation}\label{1.4}
\begin{split}
     	 \frac{1}{2}\int \left|\nabla u_{n}\right|^2-\frac{1}{2^\ast}\int\left|{(u_{n})}_{+}\right|^{2^\ast}-\frac{\mu}{2}\int(u_{n})^2_{+}\log(u_{n})^2_{+}+\frac{\mu-\lambda}{2}\int(u_{n})^2_{+}
     	  =d+o_{n}(1),
\end{split}
\end{equation}
       and
        \begin{equation}\label{1.5}
        \begin{split}
     	  \int \left|\nabla u_{n}\right|^2-\int\left|{(u_{n})}_{+}\right|^{2^\ast}-\lambda\int(u_{n})^2_{+}-\mu\int(u_{n})^2_{+}\log(u_{n})^2_{+}
     	  =o_{n}(1)\left\|u_{n}\right\|
       \end{split}
       \end{equation}
as $n \to +\infty.$ Now we divide the proof into two cases.

\textbf{Case 1:}  $\mu>0 $.

 It follows from \eqref{1.4} and \eqref{1.5} that
       \begin{equation*}
       \begin{split}
     	  &d+o_{n}(1)+o_{n}(1)\left\|u_{n}\right\|=I(u_{n})-\frac{1}{2}\langle I^{'}(u_{n}),u_{n}\rangle\\
     	  =&\frac{1}{N}\int\left|(u_{n})_{+}\right|^{2^\ast}+\frac{\mu}{2}\int(u_{n})^2_{+}
     	  \ge \frac{\mu}{2}\int(u_{n})^2_{+},
       \end{split}
       \end{equation*}	
     	thus $\left|(u_{n})_{+}\right|^2_{2}\le C+C\left\|u_{n}\right\|$.
        Using $\eqref{1.4}$ and $\eqref{1.5}$ again, we have, for $n$ large enough,
        \begin{equation*}
        \begin{split}
        	&2d+\left\|u_{n}\right\|\ge I(u_{n})-\frac{1}{2^\ast}\langle I^{'}(u_{n}),u_{n}\rangle\\
        	 =&\frac{1}{N}\left\|u_{n}\right\|^2-\frac{\lambda}{N}\int(u_{n})^2_{+}+\frac{\mu}{2}\int(u_{n})^2_{+}-\frac{1}{N}\mu\int(u_{n})^2_{+}\log(u_{n})^2_{+}.
        \end{split}
        \end{equation*}
      Recalling the following inequality (see \cite{shu} or see \cite[Theorem 8.14]{lie2})
       \begin{equation*}
       	\int u^2\log u^2\le \frac{a}{\pi}\left\|u\right\|^2+(\log|u|^2_{2}-N(1+\log a))|u|^2_{2}  ~\hbox{for} ~u\in H^1_{0}(\Omega) ~ \hbox{and}\: a>0,
       \end{equation*}
  we have that
      \begin{equation*}
          \begin{split}
     		\frac{1}{N}\left\|u_{n}\right\|^2 &\le 2d+\left\|u_{n}\right\|+C\int(u_{n})^2_{+}+\frac{1}{N}\mu\int(u_{n})^2_{+}\log(u_{n})^2_{+}\\
     		&\le C+C\left\|u_{n}\right\|+\frac{1}{N}\mu\left[\frac{a}{\pi}\left\|u_{n}\right\|^2+\left(\log\left|(u_{n})_{+}\right|^2_{2}-N(1+\log a)\right)\left|(u_{n})_{+}\right|^2_{2}\right]\\
     		&\le C+C\left\|u_{n}\right\|+\frac{1}{2N}\left\|u_{n}\right\|^2+\left|\left|(u_{n})_{+}\right|^2_{2}\log \left|(u_{n})_{+}\right|^2_{2}\right|+C\left|(u_{n})_{+}\right|^2_{2}\\
     		&\le C+C\left\|u_{n}\right\|+\frac{1}{2N}{\lVert u_{n}\rVert}^2+C\left|(u_{n})_{+}\right|^{2-\delta}_{2}+C\left|(u_{n})_{+}\right|^{2+\delta}_{2}+C\left|(u_{n})_{+}\right|^{2}_{2}\\
     		&\le C+C\left\|u_{n}\right\|+\frac{1}{2N}{\lVert u_{n}\rVert}^2+C\left(C+C\left\|u_{n}\right\|\right)^{\frac{2-\delta}{2}}+C\left(C+C\left\|u_{n}\right\|\right)^{\frac{2+\delta}{2}},
     	\end{split}	
      \end{equation*}
     where  $a>0$ with $\frac{a}{\pi}\mu<\frac{1}{2}$ and $\delta\in (0,1)$.
     So  there exists $C>0$ such that $ \| u_{n}\|<C.$

  \textbf{Case 2:} $\mu<0$.

  For $n$ large enough, we have
 \begin{equation}\label{bubu1.8}
 \begin{split}
        	2d+\left\|u_{n}\right\|&\geq \frac{1}{N}\left\|u_{n}\right\|^2-\frac{\lambda}{N}\int(u_{n})^2_{+}+\frac{\mu}{2}\int(u_{n})_+^2-\frac{1}{N}\mu\int(u_{n})^2_{+}\log(u_{n})^2_{+}\\
        &=\frac{1}{N}\left\|u_{n}\right\|^2-\frac{1}{N}\mu\int(u_{n})^2_{+}\log(e^{\frac{\lambda}{\mu}-\frac{N}{2}}(u_{n})^2_{+})\\
        &\geq\frac{1}{N}\left\|u_{n}\right\|^2-\frac{1}{N}\mu\int_{\{e^{\frac{\lambda}{\mu}-\frac{N}{2}}(u_{n})^2_{+}\leq1\}}(u_{n})^2_{+}\log(e^{\frac{\lambda}{\mu}-\frac{N}{2}}(u_{n})^2_{+})\\
        &\geq \frac{1}{N}\left\|u_{n}\right\|^2-\frac{1}{N}\mu\int_{\{e^{\frac{\lambda}{\mu}-\frac{N}{2}}(u_{n})^2_{+}\leq1\}} -e^{\frac{N}{2}-\frac{\lambda}{\mu}-1}~\mathrm{d}x\\
        &\geq \frac{1}{N}\left\|u_{n}\right\|^2+\frac{\mu}{N} e^{\frac{N}{2}-\frac{\lambda}{\mu}-1}|\Omega|,\\
        \end{split}
        \end{equation}
which implies that $\{u_n\}$ is bounded in $H_0^1(\Omega).$
 \end{proof}

 \begin{lemma}\label{lm2.32}
 	Let $\{u_{n}\}$ be a bounded sequence in $H^1_{0}(\Omega)$ such that
 	$u_{n}\to u $ a.e in $ \Omega$ as $n\rightarrow \infty$, then
 	
 		\begin{equation}\label{1.6}
 \lim\limits_{n\to \infty}\int_{\Omega}u^2_{n}\log u^2_{n}\mathrm{d}x=\int_{\Omega}u^2\log u^2\mathrm{d}x,
 \end{equation}
 and
 		
 \begin{equation}\label{1.7}
 \lim\limits_{n\to\infty}\int_{\Omega}(u_{n})^2_{+}\log(u_{n})^2_{+}\mathrm{d}x=\int_{\Omega}u^2_{+}\log u^2_{+}\mathrm{d}x.
 \end{equation}
 \end{lemma}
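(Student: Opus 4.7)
The plan is to combine the Rellich--Kondrachov compact embedding with the generalized Lebesgue dominated convergence theorem (with variable dominators). The main issue to overcome is the logarithmic singularity of $s\mapsto s^{2}\log s^{2}$ near $s=0$ together with its super-quadratic growth at infinity; both are, however, mild enough to be controlled by $L^{p}$ norms with $p$ strictly between $1$ and $2^{*}$, and exactly such $L^{p}$ convergence is handed to us by the compact Sobolev embedding.

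First I would note that, since $\{u_{n}\}$ is bounded in $H^{1}_{0}(\Omega)$ and $u_{n}\to u$ almost everywhere, the compact embedding $H^{1}_{0}(\Omega)\hookrightarrow L^{p}(\Omega)$ for $p\in[1,2^{*})$ upgrades the a.e.\ convergence to strong convergence $u_{n}\to u$ in $L^{p}(\Omega)$. Since $N\ge 3$ gives $2^{*}>2$, we may fix an exponent $\varepsilon\in(0,2^{*}-2)$ and obtain in particular $u_{n}\to u$ in $L^{2+\varepsilon}(\Omega)$.

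Next I would prove the elementary pointwise bound
\begin{equation*}
|s^{2}\log s^{2}|\le \tfrac{1}{e}+\tfrac{2}{\varepsilon}|s|^{2+\varepsilon}\qquad \text{for every }s\in\R,
\end{equation*}
by splitting: on $|s|\le 1$ a direct calculation gives $\max_{|s|\le 1}|s^{2}\log s^{2}|=1/e$ (attained at $s=e^{-1/2}$), while on $|s|>1$ one uses $\log x\le \tfrac{2}{\varepsilon}x^{\varepsilon/2}$ for $x\ge 1$ applied to $x=s^{2}$. Setting $g_{n}:=\tfrac{1}{e}+\tfrac{2}{\varepsilon}|u_{n}|^{2+\varepsilon}$ and $g:=\tfrac{1}{e}+\tfrac{2}{\varepsilon}|u|^{2+\varepsilon}$, we then have $|u_{n}^{2}\log u_{n}^{2}|\le g_{n}$, $g_{n}\to g$ a.e., and $\int_{\Omega}g_{n}\to\int_{\Omega}g$ from the $L^{2+\varepsilon}$ convergence together with the boundedness of $\Omega$. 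Continuity of $t\mapsto t^{2}\log t^{2}$ (with $0\log 0:=0$) also yields $u_{n}^{2}\log u_{n}^{2}\to u^{2}\log u^{2}$ a.e., so the generalized dominated convergence theorem produces \eqref{1.6}.

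For \eqref{1.7} the argument is identical once one observes that $s\mapsto s_{+}$ is $1$-Lipschitz, so $(u_{n})_{+}\to u_{+}$ a.e., the sequence $\{(u_{n})_{+}\}$ inherits the boundedness in $H^{1}_{0}(\Omega)$ and strong $L^{p}$ convergence for $p<2^{*}$, and the pointwise bound above applies verbatim with $s=(u_{n})_{+}$ or $s=u_{+}$. No further obstacle arises: the pointwise inequality is the only nontrivial ingredient, and the rest is a routine measure-theoretic deduction.
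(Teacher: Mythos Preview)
Your proof is correct and takes a somewhat different route from the paper. The paper first invokes a Br\'ezis--Lieb type splitting lemma from \cite{shu}, namely
\[
\lim_{n\to\infty}\int_{\Omega}\Bigl(u_{n}^{2}\log u_{n}^{2}-|u_{n}-u|^{2}\log|u_{n}-u|^{2}\Bigr)\,\mathrm{d}x=\int_{\Omega}u^{2}\log u^{2}\,\mathrm{d}x,
\]
and then kills the remainder using the two-sided polynomial bound $|s^{2}\log s^{2}|\le C|s|^{2-\delta}+C|s|^{2+\delta}$ together with compact embedding into $L^{2\pm\delta}(\Omega)$. You instead avoid the external splitting lemma entirely: once the $L^{2+\varepsilon}$ convergence is in hand, the pointwise domination $|s^{2}\log s^{2}|\le \tfrac{1}{e}+\tfrac{2}{\varepsilon}|s|^{2+\varepsilon}$ combined with $|\Omega|<\infty$ feeds directly into the generalized dominated convergence theorem. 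Your argument is more self-contained and a bit more elementary; the paper's approach, while slightly heavier here, mirrors what one would do in settings where strong subcritical convergence is not available. One small remark: for \eqref{1.7} you do not actually need $(u_{n})_{+}$ bounded in $H^{1}_{0}(\Omega)$ --- the $1$-Lipschitz property of $s\mapsto s_{+}$ already gives $(u_{n})_{+}\to u_{+}$ in $L^{2+\varepsilon}(\Omega)$ directly from the convergence of $u_{n}$, which is all you use.
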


     \begin{proof}
     	We only prove $\eqref{1.6}$.  And  $\eqref{1.7}$ can be proved similarly.\\
     	Under the conditions, there exists some $C>0$ such that
     	\begin{gather*}
     		\left|\int_{\Omega}u^2_{n}\log u^2_{n}\right|\le C~ \hbox{and}~\left|\int_\Omega u^2\log u^2\right|\le C
     	\end{gather*}
     	By \cite[Lemma 3.1]{shu}, we have
     	\begin{gather*}
     		\lim\limits_{n\to\infty}\int_{\Omega}u^2_{n}\log u^2_{n}-\left|u_{n}-u\right|^2\log \left|u_{n}-u\right|^2=\int_{\Omega} u^2\log u^2.
     	\end{gather*}
     	Since $\left|s^2\log s^2\right|\le Cs^{2-\delta}+Cs^{2+\delta}$ and the embedding of  $H^1_{0}(\Omega)\hookrightarrow L^p(1\le p<2^\ast)$ is compact, we obtain that
     	\begin{gather*}
     		\left|\int_{\Omega} \left|u_{n}-u\right|^2\log \left|u_{n}-u\right|^2 \mathrm{d}x\right|\le C\int_{\Omega}\left|u_{n}-u\right|^{2-\delta}+C\int_{\Omega}\left|u_{n}-u\right|^{2+\delta}\to 0\quad as \quad n\to\infty.\\
     	     	\end{gather*}
     Hence,
     $$\lim\limits_{n\to\infty}\int_{\Omega}u^2_{n}\log u^2_{n}=\int_{\Omega}u^2\log u^2.$$
     	
     \end{proof}
 \begin{lemma}\label{lm2.4}
 	If $N\geq 3, \lambda\in \R,  \mu>0$ and  $d<\frac{1}{N}S^{\frac{N}{2}}$, then $I(u)$ satisfies the $(PS)_d$ condition.
 \end{lemma}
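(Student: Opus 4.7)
The plan is the standard concentration–compactness argument tailored to this problem, which divides into four steps. Let $\{u_n\}$ be a $(PS)_d$ sequence with $d<\frac{1}{N}S^{N/2}$. By Lemma \ref{lm2.3}, $\{u_n\}$ is bounded in $H^1_0(\Omega)$, so up to a subsequence there exists $u\in H^1_0(\Omega)$ with $u_n\rightharpoonup u$ weakly, $u_n\to u$ strongly in $L^p(\Omega)$ for all $1\leq p<2^*$, and $u_n\to u$ a.e.\ in $\Omega$.

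First I would verify that $u$ is a weak solution of Eq.\eqref{1.1}, i.e.\ $I'(u)=0$. All terms but the logarithmic one are routine: the principal part uses weak convergence, the $\lambda$–term and $L^2$–norm use strong convergence in $L^2$, and $\int |(u_n)_+|^{2^*-2}(u_n)_+\varphi\to \int|u_+|^{2^*-2}u_+\varphi$ for every $\varphi\in C_c^\infty(\Omega)$ follows from standard arguments since $\{|(u_n)_+|^{2^*-2}(u_n)_+\}$ is bounded in $L^{2^*/(2^*-1)}(\Omega)$ and converges a.e. For the logarithmic term, I would use $|s\log s^2|\leq C(|s|^{1-\delta}+|s|^{1+\delta})$ for any small $\delta\in(0,1)$: since $2-\delta<2<2+\delta<2^*$, the compact embedding $H^1_0(\Omega)\hookrightarrow L^{2\pm\delta}(\Omega)$ combined with the a.e.\ convergence yields $(u_n)_+\log(u_n)_+^2\to u_+\log u_+^2$ strongly in $L^q(\Omega)$ for appropriate $q>1$, hence $\int \mu(u_n)_+\log(u_n)_+^2\,\varphi\to \int \mu u_+\log u_+^2\,\varphi$.

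Next I would set $v_n:=u_n-u$ and apply the Brezis–Lieb lemma, which gives $\|v_n\|^2=\|u_n\|^2-\|u\|^2+o_n(1)$ and $\int|(v_n)_+|^{2^*}=\int|(u_n)_+|^{2^*}-\int|u_+|^{2^*}+o_n(1)$. Combined with the strong $L^p$–convergence ($p<2^*$), which kills the lower–order terms in the difference, and using $\langle I'(u_n),u_n\rangle=o_n(1)$ together with $\langle I'(u),u\rangle=0$, one obtains
\begin{equation*}
\|v_n\|^2-\int|(v_n)_+|^{2^*}=o_n(1),\qquad I(u_n)=I(u)+\tfrac{1}{2}\|v_n\|^2-\tfrac{1}{2^*}\int|(v_n)_+|^{2^*}+o_n(1).
\end{equation*}
Hence $\|v_n\|^2\to\ell$ and $\int|(v_n)_+|^{2^*}\to\ell$ for some $\ell\geq 0$. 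The Sobolev inequality $S\bigl(\int|(v_n)_+|^{2^*}\bigr)^{2/2^*}\leq \|v_n\|^2$ gives $S\ell^{2/2^*}\leq \ell$, so either $\ell=0$ or $\ell\geq S^{N/2}$.

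Finally, since $u$ solves Eq.\eqref{1.1}, testing with $u$ itself and plugging into $I(u)$ yields the clean identity
\begin{equation*}
I(u)=\tfrac{1}{N}\int|u_+|^{2^*}+\tfrac{\mu}{2}\int u_+^2,
\end{equation*}
which is $\geq 0$ because $\mu>0$. Therefore $d=I(u)+\frac{\ell}{N}\geq \frac{\ell}{N}$, and the alternative $\ell\geq S^{N/2}$ would force $d\geq\frac{1}{N}S^{N/2}$, contradicting the hypothesis. Thus $\ell=0$, i.e.\ $u_n\to u$ strongly in $H^1_0(\Omega)$. The only delicate step is the passage to the limit for the logarithmic nonlinearity when identifying $u$ as a critical point; Lemma \ref{lm2.32} handles the quadratic energy piece, and the argument sketched above handles the duality pairing against test functions.
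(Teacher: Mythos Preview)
Your proposal is correct and follows essentially the same route as the paper: boundedness via Lemma~\ref{lm2.3}, identification of the weak limit $u$ as a critical point, the Brezis--Lieb splitting for $v_n=u_n-u$, the Sobolev dichotomy $\ell=0$ or $\ell\geq S^{N/2}$, and the key nonnegativity $I(u)=\tfrac{1}{N}\int|u_+|^{2^*}+\tfrac{\mu}{2}\int u_+^2\geq 0$ (here is where $\mu>0$ enters) to rule out $\ell\geq S^{N/2}$. If anything, you are slightly more careful than the paper in justifying the passage to the limit in the logarithmic term; one small imprecision is that $u$ is a critical point of $I$ (i.e.\ solves the modified equation with positive parts), not literally Eq.~\eqref{1.1}, but this does not affect the argument since the identity for $I(u)$ you use is exactly the right one.
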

     \begin{proof}
     	Let $\{u_{n}\}$ be a $(PS)_{d}$ sequence of $I$. By Lemma \ref{lm2.3}, we know that $\{u_{n}\}$ is bounded in $H^1_{0}(\Omega)$.  So there exists $u\in H^1_{0}(\Omega)$ such that, up to a subsequence,
     	$$\begin{array}{ll}
     		u_{n}&\rightharpoonup u\quad in\quad H^1_{0}(\Omega),\\
     		u_{n}&\rightarrow u\quad in\quad L^q(\Omega),\quad 1\le q<2^\ast,\\
     		u_{n}&\rightarrow u\quad a.e \quad in \quad \Omega.
     	\end{array}$$
     	Since $\langle I^{'}(u_{n}),\varphi \rangle \to0$ as $n\to\infty$ for any $\varphi\in C^{\infty}_{0}(\Omega)$, $u$ is a weak solution to
     	\begin{gather*}
     		-\Delta u=\left|u_{+}\right|^{2^\ast-2}u_{+}+\lambda u_++\mu u_{+}\log u^2_{+},
     	\end{gather*}
     	which implies that
     	
     		$$\int\left|\nabla u\right|^2=\int\left|u_{+}\right|^{2^\ast}+\lambda \int u^2_{+}+\mu\int u^2_{+}\log u^2_{+}$$
     		and
   \begin{equation}\label{2.5}
    \begin{array}{ll}
   I(u)&=\frac{1}{2}  \ds \int\left|\nabla u\right|^2-\frac{1}{2^\ast}  \ds \int\left|u_{+}\right|^{2^\ast}-\frac{\lambda}{2}\int u^2_{+}-\frac{\mu}{2}  \ds \int u^2_{+}(\log u^2_{+}-1)\\
     		&=\frac{1}{N}  \ds \int\left|u_{+}\right|^{2^\ast}+\frac{\mu}{2}  \ds \int u^2_{+}\ge 0.
     	\end{array}
     \end{equation}
     	Following from the definition of $(PS)_{d}$ sequence, we have
     	$$
     		\int\left|\nabla u_{n}\right|^2-\int\left|(u_{n})_{+}\right|^{2^\ast}-\lambda\int (u_{n})^2_{+}-\mu\int (u_{n})^2_{+}\log (u_{n})^2_{+}=o_{n}(1)$$
     and
     $$		\frac{1}{2}\int\left|\nabla u_{n}\right|^2-\frac{1}{2^\ast}\int\left|(u_{n})_{+}\right|^{2^\ast}-\frac{\lambda}{2}\int (u_{n})^2_{+}-\frac{\mu}{2}\int (u_{n})^2_{+}(\log (u_{n})^2_{+}-1)=d+o_{n}(1).
 		$$
 		Set   $v_{n}=u_{n}-u$. Then
     	$$
     		\int\left|\nabla v_{n}\right|^2-\int\left|(v_{n})_{+}\right|^{2^\ast}=o_{n}(1)$$
     and
     		$$I(u)+\frac{1}{2}\int\left|\nabla v_{n}\right|^2-\frac{1}{2^\ast}\int\left|(v_{n})_{+}\right|^{2^\ast}=d+o_{n}(1).$$
     	Let
     	$$
     	    \ds\int\left|\nabla v_{n}\right|^2\to k, ~ \hbox{as} ~ n\to \infty.
     	$$
     	So
     	$$
     	    \int\left|(v_{n})_{+}\right|^{2^\ast}\to k, ~ \hbox{as} ~  n\to\infty.
     	 $$
     	By the definition of $S$, we have
     	$$      		\left|\nabla u\right|^2_{2} \ge S\left|u\right|^2_{2^\ast},~~\forall u\in H^1_{0}(\Omega)$$
     	and
     	$$k+o_{n}(1)=\int\left|\nabla v_{n}\right|^2 \ge S(\int\left|(v_{n})_{+}\right|^{2^\ast})^{\frac{2}{2^\ast}}=Sk^\frac{N-2}{N}+o_{n}(1).
     	$$
     	If $k> 0$, then $k\ge S^\frac{N}{2}$.
     	By  $\eqref{2.5}$, we have
     	$$\begin{array}{ll}
     		0\le I(u)\le d-(\frac{1}{2}-\frac{1}{2^\ast})k\le d-\frac{1}{N}S^{\frac{N}{2}}<0,
     	\end{array}$$
     	which is impossible.
     	So $k=0$ and thus
     	$$\begin{array}{ll}
     	u_{n}\rightarrow u,~\hbox{ in} ~ H^1_{0}(\Omega)
     	\end{array}$$
     	
     \end{proof}

\begin{lemma}\label{lm2.41}
 	Assume that $N\geq 3,  \lambda\in \R, \mu<0$ and $c\in (-\infty, 0)\cup (0, \frac{1}{N}S^{\frac{N}{2}})$. If $\{u_n\}$ is a $(PS)_c$ sequence of $I$, then there exists a $u\in H^1_0(\Omega)\setminus\{0\}$ such that $u_n \rightharpoonup u$ weakly in $H^1_0(\Omega)$ and   $u$ is a nonnegative  weak solution of \eqref{1.1}.
 \end{lemma}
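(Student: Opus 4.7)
The plan is to follow the standard concentration-compactness recipe for Br\'ezis--Nirenberg type problems, with the extra work concentrated in two places: (a) passing to the limit in the logarithmic term, and (b) ruling out $u\equiv 0$ using the assumed strict upper/lower bounds on $c$. By Lemma \ref{lm2.3} (whose hypotheses cover $\mu\in\R\setminus\{0\}$ and any real level), the sequence $\{u_n\}$ is bounded in $H^1_0(\Omega)$, so after extracting a subsequence I have $u_n\rightharpoonup u$ weakly in $H^1_0(\Omega)$, $u_n\to u$ strongly in $L^p(\Omega)$ for every $1\le p<2^*$, and $u_n\to u$ a.e.\ in $\Omega$.

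Next I would check that $u$ is a weak solution of \eqref{1.1}. For any $\varphi\in C_0^\infty(\Omega)$ I pass to the limit in $\langle I'(u_n),\varphi\rangle\to 0$: the Dirichlet term converges by weak convergence, $\lambda\int(u_n)_+\varphi$ by strong $L^2$ convergence, the critical term $\int(u_n)_+^{2^*-1}\varphi$ by a.e.\ convergence together with the boundedness of $(u_n)_+^{2^*-1}$ in $L^{2^*/(2^*-1)}(\Omega)$ (which forces weak convergence to $u_+^{2^*-1}$), and the logarithmic piece $\mu\int (u_n)_+\log(u_n)_+^2\,\varphi$ using the pointwise estimate $|s\log s^2|\le C(s^{1-\delta}+s^{1+\delta})$ for small $\delta>0$ (with $1+\delta<2^*-1$) combined with Vitali's theorem, exactly in the spirit of Lemma \ref{lm2.32}. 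Hence $I'(u)=0$ in $H^{-1}(\Omega)$. To see that $u\ge 0$, I test this identity against $u_-$; because $u_+u_-=0$ a.e., all nonlinear and lower-order terms drop out and one is left with $\int|\nabla u_-|^2=\|u_-\|^2=0$, so $u_-\equiv 0$.

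The crux of the argument is to exclude the vanishing scenario $u\equiv 0$, where the strict inequalities on $c$ enter. Suppose, toward a contradiction, that $u\equiv 0$. Then $u_n\to 0$ in every $L^p(\Omega)$ with $p<2^*$, and Lemma \ref{lm2.32} gives both $\int (u_n)_+^2\to 0$ and $\int (u_n)_+^2\log(u_n)_+^2\to 0$. The relations $I(u_n)\to c$ and $\langle I'(u_n),u_n\rangle\to 0$ then reduce to
\begin{equation*}
\|u_n\|^2-\int|(u_n)_+|^{2^*}=o_n(1),\qquad \tfrac{1}{2}\|u_n\|^2-\tfrac{1}{2^*}\int|(u_n)_+|^{2^*}=c+o_n(1).
\end{equation*}
Setting $k:=\lim_n\|u_n\|^2=\lim_n\int|(u_n)_+|^{2^*}\ge 0$, subtracting the two identities yields $c=k/N$. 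The Sobolev inequality applied to $u_n$ produces $k\ge Sk^{(N-2)/N}$, so either $k=0$ or $k\ge S^{N/2}$. The first option gives $c=0$ and the second gives $c\ge\frac{1}{N}S^{N/2}$, both of which contradict $c\in(-\infty,0)\cup(0,\tfrac{1}{N}S^{N/2})$ (note that $c<0$ is already incompatible with $k\ge 0$). Therefore $u\not\equiv 0$.

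The main obstacle, as usual with critical growth, is controlling the nonlinear terms under mere weak convergence; the logarithmic perturbation adds a mild complication because $s\log s^2$ is neither bounded near $0$ nor polynomially dominated at infinity, but the splitting used in Lemma \ref{lm2.1}--Lemma \ref{lm2.32} plus the $L^p$ bound for $p=2\pm\delta$ dispatches it cleanly. The role of the hypothesis $c\neq 0$ and $c<\frac{1}{N}S^{N/2}$ is precisely to forbid the two ``trivial'' limits $k=0$ and $k\ge S^{N/2}$ of the critical-mass dichotomy, which is the only nontrivial feature of the proof.
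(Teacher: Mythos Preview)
Your proof is correct and follows essentially the same route as the paper's: boundedness from Lemma~\ref{lm2.3}, weak limit is a nonnegative critical point via testing with $\varphi\in C_0^\infty(\Omega)$ and with $u_-$, and the vanishing case $u\equiv 0$ is ruled out by the Sobolev dichotomy $k=0$ or $k\ge S^{N/2}$, both of which contradict $c\in(-\infty,0)\cup(0,\tfrac{1}{N}S^{N/2})$. The only cosmetic differences are that the paper phrases the contradiction via $v_n:=u_n-u$ and Br\'ezis--Lieb (which collapses to your argument once $u=0$), and your reference to ``Lemma~\ref{lm2.1}'' does not match any label in the paper.
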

     \begin{proof}
     	Let $\{u_{n}\}$ be a $(PS)_{c}$ sequence of $I$. By Lemma \ref{lm2.3}, we know that $\{u_{n}\}$ is bounded in $H^1_{0}(\Omega)$.  So there exists $u\in H^1_{0}(\Omega)$ such that, up to a subsequence,
     	$$\begin{array}{ll}
     		u_{n}&\rightharpoonup u\quad in\quad H^1_{0}(\Omega),\\
     		u_{n}&\rightarrow u\quad in\quad L^q(\Omega),\quad 1\le q<2^\ast,\\
     		u_{n}&\rightarrow u\quad a.e \quad in \quad \Omega.
     	\end{array}$$
     	Since $\langle I^{'}(u_{n}),\varphi \rangle\to0$ as $n\to\infty$ for any $\varphi\in C^{\infty}_{0}(\Omega)$, $u$ is a weak solution to
     	\begin{equation}\label{1227.1}
     		-\Delta u=\left|u_{+}\right|^{2^\ast-2}u_{+}+\lambda u_++\mu u_{+}\log u^2_{+}.
     	\end{equation}
%
    Assume that $u=0$ and set $v_{n}:=u_{n}-u$. 	Following from the definition of $(PS)_{c}$ sequence and Brezis-Lieb Lemma, we have
     	$$
     		\int\left|\nabla v_{n}\right|^2-\int\left|(v_{n})_{+}\right|^{2^\ast}=o_{n}(1)$$
     and
     		\begin{equation}\label{1228.1}
     \frac{1}{2}\int\left|\nabla v_{n}\right|^2-\frac{1}{2^\ast}\int\left|(v_{n})_{+}\right|^{2^\ast}=c+o_{n}(1).
     \end{equation}
     	Let
     	$$
     	    \ds\int\left|\nabla v_{n}\right|^2\to k, ~ \hbox{as} ~ n\to \infty.
     	$$Then
     	$$
     	    \int\left|(v_{n})_{+}\right|^{2^\ast}\to k, ~ \hbox{as} ~  n\to\infty.
     	 $$
     It is easy to see that $k>0$. In fact, if $k=0$, then $\int\left|\nabla u_{n}\right|^2=\int\left|\nabla v_{n}\right|^2\to 0$, which implies that $I(u_n)\to 0$, contradicting to $c\neq 0$.
     Going on as Lemma \ref{lm2.4}, we can obtain that
    $k\ge S^\frac{N}{2}$. So,
     	by  $\eqref{1228.1}$, we have
     	$$\begin{array}{ll}
     		\frac{1}{N}S^\frac{N}{2}\leq \frac{1}{N}k=(\frac{1}{2}-\frac{1}{2^\ast})k=c<\frac{1}{N}S^{\frac{N}{2}},
     	\end{array}$$
     	a contradiction.
     	Hence, $u\neq 0$.

     By the density of $C_0^\infty$ in $H^1_0(\Omega)$ and \eqref{1227.1}, we have that
    $$ \int\left|\nabla u_-\right|^2=0, $$
    which implies that $u\geq 0$.
    Therefore, we can see that $u\in H^1_0(\Omega)\setminus\{0\}$  and $u$ is a nonnegative  weak solution of \eqref{1.1}.

     \end{proof}


    \section{Estimations on $c_M$ }\label{section:Estimations}

    In this section, we are going to give an estimation that $c_M<\frac 1N S^{\frac N2}$, under different assumptions on parameters $\lambda$, $\mu$ and dimension $N$.  Inspired by Br\'{e}zis-Nirengberg\cite{bre2}, it is sufficient to find some suitable $U_\epsilon\in H^1_0(\Omega)$ such that $\sup_{t\ge0}I(tU_\epsilon)<\frac{1}{N}S^{\frac{N}{2}}$.  Without loss of generality, we may assume that $0\in \Omega$, in particular, we suppose that $0$ is the geometric center of $\Omega$, i.e., $\rho_{\max}=dist(0,\partial\Omega)$.

    It is well-known  (see \cite{edm,guy,van})  that the following problem
     \begin{gather*}
        \begin{cases}
           -\Delta u=\left|u\right|^{2^\ast-2}u ,& x\in\mathbb{R}^N,\\
          \quad\:\,\,u>0,&\\
          \:\,u(0)=\max\limits_{x\in\mathbb{R}^N} u(x),&\\
        \end{cases}
     \end{gather*}
      has a unique solution $\widetilde{u}(x)$
     \begin{equation*}
     	 {\widetilde{u}}(x)=\left[N(N-2)\right]^{\frac{N-2}{4}}\frac{1}{{(1+\left|x\right|^2)}^{\frac{N-2}{2}}}.
     \end{equation*}
     And correspondingly, up to a dilations,
     \begin{equation*}
     u_{\epsilon}(x)=\left[N(N-2)\right]^{\frac{N-2}{4}}
     \left(\frac{\epsilon}{\epsilon^2+\left|x\right|^2}\right)^{\frac{N-2}{2}}\\
     \end{equation*}
     is a minimizer for $S$.

    We let $\varphi(x)\in C^\infty_{0}(\Omega)$ be  such that $\varphi(x)\equiv 1$ for $x$ in some neighborhood  $B_\rho(0)$ of $0$, and define
     \begin{equation}
     U_{\epsilon}(x)=\varphi(x)u_{\epsilon}(x).
    \end{equation}
  \begin{lemma}
  	If $N\ge4$, then we have, as $\epsilon\to 0^+$,
 \begin{equation}
  	   \int_\Omega\left|\nabla U_\epsilon\right|^2=S^{\frac{N}{2}}+O(\epsilon^{N-2}),
  \end{equation}
  \begin{equation}
  	   \int_{\Omega}\left|U_{\epsilon}\right|^{2^\ast}=S^{\frac{N}{2}}+O(\epsilon^N),
  \end{equation}
   and
   \begin{gather*}
   	  \int_{\Omega}\left|U_{\epsilon}\right|^2=
   	  \begin{cases}
  	     d\epsilon^2\left|\ln\epsilon\right|+O(\epsilon^2),~&if~ N=4,\\ d\epsilon^2+O(\epsilon^{N-2}),~&if~ N\ge5,
      \end{cases}
   \end{gather*}
   where $d$ is a positive constant.
  \end{lemma}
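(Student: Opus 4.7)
My plan is to exploit the fact that $u_\epsilon$ is a minimizer for $S$ on all of $\R^N$, so $\int_{\R^N}|\nabla u_\epsilon|^2 = \int_{\R^N}|u_\epsilon|^{2^*} = S^{N/2}$, and to control the error introduced by the cutoff $\varphi$ through the explicit decay of $u_\epsilon$ outside $B_\rho(0)$. For each of the three asymptotics the strategy is the same: split $\int_\Omega = \int_{B_\rho(0)} + \int_{\Omega\setminus B_\rho(0)}$, use $\varphi\equiv 1$ on $B_\rho(0)$ to reduce to a pure bubble integral, and bound the outside piece using pointwise estimates for $u_\epsilon$ and $\nabla u_\epsilon$.

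For the Dirichlet norm, I would expand $|\nabla U_\epsilon|^2 = \varphi^2|\nabla u_\epsilon|^2 + 2\varphi u_\epsilon\nabla\varphi\cdot\nabla u_\epsilon + u_\epsilon^2|\nabla\varphi|^2$ and write
\[
\int_\Omega|\nabla U_\epsilon|^2 = \int_{B_\rho(0)}|\nabla u_\epsilon|^2 + \int_{\Omega\setminus B_\rho(0)}\bigl(\varphi^2|\nabla u_\epsilon|^2 + 2\varphi u_\epsilon\nabla\varphi\cdot\nabla u_\epsilon + u_\epsilon^2|\nabla\varphi|^2\bigr).
\]
The first piece equals $S^{N/2} - \int_{\R^N\setminus B_\rho(0)}|\nabla u_\epsilon|^2$, and using the pointwise bound $|\nabla u_\epsilon(x)| \le C\epsilon^{(N-2)/2}|x|/(\epsilon^2+|x|^2)^{N/2}$ the tail integral is $O(\epsilon^{N-2})$; each cross term on $\Omega\setminus B_\rho(0)$ is bounded analogously by $O(\epsilon^{N-2})$ since $u_\epsilon(x) \le C\epsilon^{(N-2)/2}/|x|^{N-2}$ there. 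The $L^{2^*}$ estimate is similar: $\int_{B_\rho(0)} u_\epsilon^{2^*} = S^{N/2} - \int_{\R^N\setminus B_\rho(0)} u_\epsilon^{2^*}$, and since $u_\epsilon^{2^*}(x)\sim \epsilon^N/|x|^{2N}$ for $|x|\ge\rho$, the tail and the cutoff contribution on $\Omega\setminus B_\rho(0)$ are both $O(\epsilon^N)$.

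For the $L^2$ estimate I would change variables $x=\epsilon y$, obtaining, with $C_N=[N(N-2)]^{(N-2)/4}$,
\[
\int_{B_\rho(0)} u_\epsilon^2\,dx = C_N^2\,\epsilon^2\int_{B_{\rho/\epsilon}(0)}\frac{dy}{(1+|y|^2)^{N-2}}.
\]
The integrand decays like $|y|^{-2(N-2)}$, so its integral over $\R^N$ converges iff $2(N-2)>N$, i.e., iff $N>4$. In the borderline case $N=4$ the integral over $B_{\rho/\epsilon}$ blows up like $\log(\rho/\epsilon)$, which after multiplication by $\epsilon^2$ yields the leading order $d\epsilon^2|\ln\epsilon|$ plus a remainder $O(\epsilon^2)$. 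For $N\ge 5$ the integral over $\R^N$ equals a positive constant $d$ and the tail is $O((\rho/\epsilon)^{4-N})$, giving $d\epsilon^2 + O(\epsilon^{N-2})$. The residual contribution $\int_{\Omega\setminus B_\rho(0)}\varphi^2 u_\epsilon^2$ is dominated by $C\epsilon^{N-2}\int_{\Omega\setminus B_\rho(0)}|x|^{-2(N-2)}\,dx$, which is $O(\epsilon^2)$ for $N=4$ and $O(\epsilon^{N-2})$ for $N\ge 5$, so it is absorbed into the error term in both cases.

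No serious obstacle arises here; the essential point is to keep the bookkeeping tight so that the logarithmic correction in dimension $N=4$ emerges from the precisely critical decay rate $(1+|y|^2)^{-2}$, and to verify that every cross/cutoff term on $\Omega\setminus B_\rho(0)$ decays at least at the advertised order. All computations reduce to elementary radial integrals and invoke no ingredient beyond the classical Brézis--Nirenberg asymptotics.
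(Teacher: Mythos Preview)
Your proposal is correct and follows the classical Br\'ezis--Nirenberg computation; the paper itself does not give a proof but simply refers to Willem's book \cite{wil}, whose argument is exactly the one you outline. So your approach coincides with the intended (cited) proof, only spelled out in more detail.
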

  \begin{proof}
  	The proof can be found in \cite{wil}.
  \end{proof}
  \begin{lemma}
  	If $N\ge5$, then we have,  as $\epsilon\to0^+$,
  	\begin{equation*}
  		\int_{\Omega}U^2_{\epsilon}\log U^2_{\epsilon}=C_0\epsilon^2\log\frac{1}{\epsilon}+O({\epsilon}^2),
  	\end{equation*}
  where $C_0$ is a positive constant.
  \end{lemma}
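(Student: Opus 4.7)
\medskip

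\noindent\textbf{Proof plan.} The strategy is to isolate the part of $\Omega$ where the cutoff $\varphi$ is inactive, and reduce everything else to explicit integrals involving $u_\epsilon$, which are then handled by the dilation $x=\epsilon y$.

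First, split the integration: on $B_\rho(0)$ we have $\varphi\equiv 1$, so $U_\epsilon=u_\epsilon$ there, and on $\Omega\setminus B_\rho(0)$ the integrand is uniformly small. Indeed, for $|x|\ge\rho$, $u_\epsilon(x)=O(\epsilon^{(N-2)/2})$ uniformly, so $U_\epsilon^2=O(\epsilon^{N-2})$ and $|\log U_\epsilon^2|=O(|\log\epsilon|)$ uniformly. Hence
\[
\int_{\Omega\setminus B_\rho(0)}U_\epsilon^2\log U_\epsilon^2=O\!\left(\epsilon^{N-2}|\log\epsilon|\right)=O(\epsilon^2),
\]
where the last equality uses $N\ge 5$.

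Next, on $B_\rho(0)$, use the explicit form of $u_\epsilon$ and set $C_N:=[N(N-2)]^{(N-2)/4}$ to write
\[
\log u_\epsilon^2=2\log C_N+(N-2)\log\epsilon-(N-2)\log(\epsilon^2+|x|^2).
\]
This decomposes the target integral into three pieces. The first two are controlled using the known estimate $\int_{B_\rho(0)}u_\epsilon^2=C_N^2 a\,\epsilon^2+O(\epsilon^{N-2})$, where $a:=\int_{\mathbb{R}^N}(1+|y|^2)^{-(N-2)}dy<+\infty$ (convergent precisely because $2(N-2)>N$, i.e.\ $N\ge 5$). The third piece is the main one; after the dilation $x=\epsilon y$ it becomes
\[
\int_{B_\rho(0)}u_\epsilon^2\log(\epsilon^2+|x|^2)\,dx=C_N^2\epsilon^2\!\int_{B_{\rho/\epsilon}(0)}\!\frac{2\log\epsilon+\log(1+|y|^2)}{(1+|y|^2)^{N-2}}\,dy.
\]
The tail estimate for $|y|\ge R$ gives an error $O(R^{4-N}\log R)=O(\epsilon^{N-4}\log(1/\epsilon))$, which multiplied by $\epsilon^2$ is $O(\epsilon^{N-2}\log(1/\epsilon))=O(\epsilon^2)$.

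Finally, collect the three contributions. The $2\log C_N$-term is $O(\epsilon^2)$. The $(N-2)\log\epsilon$-term from $u_\epsilon^2 \cdot (N-2)\log\epsilon$ gives $-(N-2)C_N^2 a\,\epsilon^2\log(1/\epsilon)+O(\epsilon^2)$. The third term contributes, with the overall minus sign, $+2(N-2)C_N^2 a\,\epsilon^2\log(1/\epsilon)+O(\epsilon^2)$. Adding these yields
\[
\int_\Omega U_\epsilon^2\log U_\epsilon^2=(N-2)C_N^2 a\,\epsilon^2\log(1/\epsilon)+O(\epsilon^2),
\]
so one may take $C_0=(N-2)C_N^2 a>0$. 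The main (and essentially only) subtle point is the bookkeeping: the $\log\epsilon$ coming from the explicit formula and the $\log\epsilon$ coming from the change of variables enter with opposite-looking signs, and one must verify that they combine (not cancel) to produce a positive coefficient on $\epsilon^2\log(1/\epsilon)$, while all remainders are pushed into $O(\epsilon^2)$ thanks to $N-2\ge 3>2$.
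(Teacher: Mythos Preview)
Your approach is essentially the same as the paper's: isolate the contribution from $B_\rho(0)$, perform the dilation $x=\epsilon y$, and extract the coefficient of $\epsilon^2\log(1/\epsilon)$ from the integral $\int_{\mathbb{R}^N}(1+|y|^2)^{-(N-2)}dy$, while all side terms land in $O(\epsilon^2)$. The bookkeeping and the final constant $C_0=(N-2)C_N^2 a$ are correct.

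There is one small gap in your treatment of the annulus $\Omega\setminus B_\rho(0)$. You claim $|\log U_\epsilon^2|=O(|\log\epsilon|)$ uniformly there, but this is false: since $\varphi\in C_0^\infty(\Omega)$ vanishes on part of $\Omega\setminus B_\rho(0)$, the quantity $U_\epsilon=\varphi u_\epsilon$ can be arbitrarily small (indeed zero) independently of $\epsilon$, so $|\log U_\epsilon^2|$ is not controlled by $|\log\epsilon|$. The fix is immediate and is what the paper does: use the elementary bound $|t\log t|\le C_\delta t^{1-\delta}$ for $0\le t\le 1$ with $t=U_\epsilon^2$, which gives
\[
\int_{\Omega\setminus B_\rho(0)}|U_\epsilon^2\log U_\epsilon^2|\le C\int_{\Omega\setminus B_\rho(0)}U_\epsilon^{2(1-\delta)}\le C\epsilon^{(N-2)(1-\delta)}=O(\epsilon^2)
\]
for any $\delta\in(0,1/3]$ when $N\ge 5$. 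With this correction your argument goes through.
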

  \begin{proof}
  	 $$\begin{array}{ll}
  	  \ds \int_{\Omega}U^2_{\epsilon}\log U^2_{\epsilon}&=\ds\int_{\Omega}\varphi^2 u^2_{\epsilon}\log \varphi^2+\ds\int_{\Omega}\varphi^2u^2_{\epsilon}\log u^2_{\epsilon}\\
  	   &\overset{\triangle}{=}\uppercase\expandafter{\romannumeral1}+\uppercase\expandafter{\romannumeral2} 
  	\end{array}$$
  	Since	
  		$\left|s^2\log s^2\right| \le C$  for $  0\le s \le 1,$ we have
  		$$\left|\uppercase\expandafter{\romannumeral1}\right| \le C\int_{\Omega}u^2_{\epsilon}=O(\epsilon^2).$$
  		$$
  II=\int_{B_{\rho}(0)}u^2_{\epsilon}\log u^2_{\epsilon}+\int_{\Omega\setminus{B_{\rho}(0)}}\varphi^2u^2_{\epsilon}\log u^2_{\epsilon}
  		 \overset{\triangle}{=}\uppercase\expandafter{\romannumeral2}_{1}+\uppercase\expandafter{\romannumeral2}_{2}.
  	$$
  	Since $\left|s\log s\right| \le C_1s^{1-\delta}+C_2 s^{1+\delta}$ for all $s>0$, where $0<C_1<C_2$ and $0<\delta<\frac{1}{3}$ such that $(N-2)(1-\delta)\geq 2$,
  	\begin{equation*}
    \begin{split}
  		 \left|\uppercase\expandafter{\romannumeral2}_{2}\right|&\le\int_{\Omega\setminus{B_{\rho}(0)}}\left|u^2_{\epsilon}\log u^2_{\epsilon}\right|\\
  		&\le C\int_{\Omega\setminus{B_{\rho}(0)}}(u^{2(1-\delta)}_{\epsilon}+ u^{2(1+\delta)}_{\epsilon})\\
  		&\le C\left|\Omega\right|(\epsilon^{(N-2)(1-\delta)}+\epsilon^{(N-2)(1+\delta)})\\
  		&=O(\epsilon^2),
	\end{split}
    \end{equation*}
and
  	\begin{equation*}
    \begin{split}
  		\uppercase\expandafter{\romannumeral2}_{1}
  		&=\int_{B_{(0,\rho)}}u^2_{\epsilon}\log u^2_{\epsilon}\mathrm{d}x\\  		 &=C\epsilon^2\int_{B_{\rho/\epsilon}(0)}\frac{1}{(1+\left|y\right|^2)^{N-2}}\log\left(C\epsilon^{-(N-2)}\frac{1}{(1+\left|y\right|^2)^{N-2}}\right)\mathrm{d}y\\
  		 &=C\epsilon^2\log(\frac{1}{\epsilon})\int_{B_{\rho/\epsilon}(0)}\frac{1}{(1+\left|y\right|^2)^{N-2}}
  +C\epsilon^2\int_{B_{\rho/\epsilon}(0)}\frac{1}{(1+\left|y\right|^2)^{N-2}}\log\frac{C}{(1+\left|y\right|^2)^{N-2}}\\
  		 &=C\epsilon^2\log(\frac{1}{\epsilon})\int_{\mathbb{R}^{N}}\frac{1}{(1+\left|y\right|^2)^{N-2}}\mathrm{d}y+O(\epsilon^2)
  +\epsilon^2O(\int_{\R^N}\frac{1}{(1+\left|y\right|^2)^{N-2-\frac{1}{4}}}\mathrm{d}y)\\  		 &=C\epsilon^2\log(\frac{1}{\epsilon})+O(\epsilon^2),
  	\end{split}
    \end{equation*}
  where we have used the fact that
    \begin{equation*}
        \int_{B^c_{\rho/\epsilon}(0)}\frac{1}{(1+\left|y\right|^2)^{N-2}}\mathrm{d}y=O(\epsilon^{N-4}).
    \end{equation*}
   Thus
    \begin{equation*}
        \int_{\Omega}U^2_{\epsilon}\log U^2_{\epsilon}=C_0\epsilon^2\log(\frac{1}{\epsilon})+O(\epsilon^2).
    \end{equation*}
    We complete the proof.
  \end{proof}
  \begin{lemma}\label{lm3.3}
  	If $N\ge 5$, $\lambda\in \R$  and $ \mu>0$, then $c_M< \frac{1}{N}S^{\frac{N}{2}}$.
  \end{lemma}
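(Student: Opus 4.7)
The plan is to follow the classical Br\'ezis--Nirenberg strategy: use the truncated Aubin--Talenti bubble $U_\epsilon$ as a mountain pass path and verify that the maximum energy along this path drops strictly below the critical threshold $\frac{1}{N}S^{N/2}$ for sufficiently small $\epsilon > 0$.

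First, since $\mu > 0$ and $I(t U_\epsilon) \to -\infty$ as $t \to +\infty$ (exactly as in Case~1 of Lemma~\ref{Lemma1}), I would fix $t_0 = t_0(\epsilon) > 0$ large enough that $I(t_0 U_\epsilon) < 0$, and then take the admissible path $\gamma(s) := s t_0 U_\epsilon$ for $s \in [0,1]$. This path lies in $\Gamma$ and yields the bound $c_M \leq \sup_{t \geq 0} I(t U_\epsilon)$. It therefore suffices to show $\sup_{t \geq 0} I(t U_\epsilon) < \frac{1}{N}S^{N/2}$ for some small $\epsilon$.

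Next, writing $g(t) := I(tU_\epsilon)$ and using the identity $\int U_\epsilon^2 \log(t^2 U_\epsilon^2)\,dx = (\log t^2)\,|U_\epsilon|_2^2 + \int U_\epsilon^2 \log U_\epsilon^2\,dx$, I would decompose
\[
g(t) = \frac{t^2}{2}\|U_\epsilon\|^2 - \frac{t^{2^*}}{2^*}|U_\epsilon|_{2^*}^{2^*} - \frac{\mu t^2}{2}\int U_\epsilon^2 \log U_\epsilon^2\, dx - \frac{t^2}{2}\bigl(\lambda + \mu \log t^2 - \mu\bigr)|U_\epsilon|_2^2.
\]
The scale-invariant part formed by the first two terms is maximized at $t_*(\epsilon)=(\|U_\epsilon\|^2/|U_\epsilon|_{2^*}^{2^*})^{1/(2^*-2)}$, giving
\[
\frac{1}{N}\left(\frac{\|U_\epsilon\|^2}{|U_\epsilon|_{2^*}^{2}}\right)^{N/2} = \frac{1}{N}S^{N/2} + O(\epsilon^{N-2})
\]
by the estimates of $\|U_\epsilon\|^2$ and $|U_\epsilon|_{2^*}^{2^*}$. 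A routine implicit-function argument applied to $g'(t_\epsilon) = 0$, together with the fact that $|U_\epsilon|_2^2$ and $\int U_\epsilon^2 \log U_\epsilon^2\,dx$ both tend to $0$ as $\epsilon \to 0^+$ for $N \geq 5$, shows that the actual maximizer $t_\epsilon$ of $g$ satisfies $t_\epsilon \to 1$, so it lies in a compact subset of $(0,\infty)$ uniformly in small $\epsilon$.

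Finally, plugging the asymptotic expansions from the preceding two lemmas into $g(t_\epsilon)$, the perturbation due to $\lambda$ and $\log t_\epsilon^2$ terms is controlled by $|U_\epsilon|_2^2 = O(\epsilon^2)$, while the term $-\frac{\mu t_\epsilon^2}{2}\int U_\epsilon^2\log U_\epsilon^2\, dx$ contributes $-\frac{\mu t_\epsilon^2}{2}\,C_0\,\epsilon^2\log(1/\epsilon) + O(\epsilon^2)$, which is \emph{strictly negative} of order $\epsilon^2 \log(1/\epsilon)$ since $\mu > 0$ and $C_0 > 0$. Altogether,
\[
\sup_{t \geq 0} g(t) \leq \frac{1}{N}S^{N/2} + O(\epsilon^{N-2}) + O(\epsilon^2) - \frac{\mu t_\epsilon^2}{2}\,C_0\,\epsilon^2 \log\frac{1}{\epsilon}.
\]
For $N \geq 5$ one has $\epsilon^{N-2} = o(\epsilon^2 \log(1/\epsilon))$ and $\epsilon^2 = o(\epsilon^2 \log(1/\epsilon))$, so the strictly negative $\epsilon^2 \log(1/\epsilon)$ term dominates every positive error, delivering the desired strict inequality. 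The main obstacle I expect is the careful bookkeeping that confines $t_\epsilon$ to a compact subset of $(0,\infty)$ independent of $\epsilon$; once this is in hand, the $\log t_\epsilon^2$ factor stays bounded and can be safely absorbed into the $O(\epsilon^2)$ remainder, and the proof reduces to comparing the orders $\epsilon^2\log(1/\epsilon)$ and $\epsilon^{N-2}$, which is favorable precisely because $N \geq 5$.
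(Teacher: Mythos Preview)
Your proposal is correct and follows essentially the same Br\'ezis--Nirenberg strategy as the paper: bound $c_M$ by $\sup_{t\geq 0} I(tU_\epsilon)$, confine the maximizer $t_\epsilon$ to a fixed compact subset of $(0,\infty)$, and then let the negative $-\,c\,\mu\,\epsilon^2\log(1/\epsilon)$ contribution from $\int U_\epsilon^2\log U_\epsilon^2$ dominate the $O(\epsilon^{N-2})$ and $O(\epsilon^2)$ remainders. The only cosmetic difference is that the paper obtains the two-sided bound on $t_\epsilon$ by reading off elementary inequalities directly from the critical-point equation $g'(t_\epsilon)=0$, rather than invoking an implicit-function/continuity argument; their route is slightly more self-contained and avoids any non-degeneracy check, but the substance is identical.
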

  \begin{proof}
  	Let $g(t)\overset{\triangle}{=}I(tU_\epsilon)$. By Lemma \ref{Lemma1}, $g(0)=0$ and  $\lim\limits_{t\to+\infty}g(t)$$=-\infty$, we can find $t_{\epsilon}\in(0,+\infty)$ such that
  	\begin{equation*}
 	 	 \sup\limits_{t\ge0}I(tU_\epsilon)=\sup\limits_{t\ge0}g(t)=g(t_{\epsilon})=I(t_\epsilon U_\epsilon).
  	\end{equation*}
	So
  	\begin{equation*}
  		\int\left|\nabla U_{\epsilon}\right|^2-t^{2^\ast-2}_{\epsilon}\int \left|U_{\epsilon}\right|^{2^\ast}-\lambda\int U_{\epsilon}^2-\mu\int U_{\epsilon}^2\log U^2_{\epsilon}-\mu\log t^2_{\epsilon}\int U^2_{\epsilon}=0,
  	\end{equation*}
  	which implies that, as $\epsilon\to0^+$
  	\begin{equation*}
      \begin{split}
  		2S^{\frac{N}{2}}
  		&\ge \int\left|\nabla U_{\epsilon}\right|^2-\lambda\int U_{\epsilon}^2-\mu\int U^2_{\epsilon}\log U^2_{\epsilon}\\
  		&= t^{2^\ast-2}_{\epsilon}\int \left|U_{\epsilon}\right|^{2^\ast}+\mu\log t^2_{\epsilon}\int U^2_{\epsilon}\\
  		&\ge t^{2^\ast-2}_{\epsilon}(\frac{1}{2}S^{\frac{N}{2}})-c\left|\log t^2_{\epsilon}\right|.
  	  \end{split}
    \end{equation*}
  	So there exists $c_{1}>0$ such that $t_{\epsilon}<c_{1}$.\\
  	On the  other hand, as $\epsilon\to 0^+$,
  	\begin{equation*}
      \begin{split}
  		\frac{1}{2}S^{\frac{N}{2}}
  		&\le\int\left|\nabla U_{\epsilon}\right|^2-\lambda\int U_{\epsilon}^2-\mu\int U^2_{\epsilon}\log U^2_{\epsilon}\\
  		&=t^{2^\ast-2}_{\epsilon}\int \left|U_{\epsilon}\right|^{2^\ast}_{\epsilon}+\mu\log t^2_{\epsilon}\int U^2_{\epsilon}\\
  		&\le 2S^{\frac{N}{2}}t^{2^\ast-2}_{\epsilon}+Ct^{2^\ast-2}_{\epsilon},
  	\end{split}
    \end{equation*}
  	which implies that there exists $c_{2}>0$ such that $t_{\epsilon}>c_{2}$.

  	Therefore, combining with the definition of $c_M$, we have that, as $\epsilon\to 0^+$,
  	\begin{equation*}
      \begin{split}
  		c_M&\le\sup\limits_{t\ge0}I(tu_\epsilon)\\
  		 &=\frac{t^2_{\epsilon}}{2}\int\left|\nabla U_{\epsilon}\right|^2-\frac{t^{2^\ast}_\epsilon}{2^\ast}\int \left|U_{\epsilon}\right|^{2^\ast}-\frac{\lambda}{2} t^2_{\epsilon}\int U^2_{\epsilon}-\frac{\mu}{2}\int t^2_{\epsilon}U^2_{\epsilon}(\log (t^2_{\epsilon}U^2_{\epsilon})-1)\\
  		 &\leq (\frac{t^2_{\epsilon}}{2}-\frac{t^{2^\ast}_\epsilon}{2^\ast})S^{\frac{N}{2}}+O(\epsilon^2)+\frac{\mu}{2}t^2_{\epsilon}(1-\log t^2_{\epsilon})\int U^2_{\epsilon}-\frac{\mu}{2}t^2_{\epsilon}\int U^2_{\epsilon}\log U^2_{\epsilon}\\
  		 &\le\frac{1}{N}S^{\frac{N}{2}}-c\mu\epsilon^2\log(\frac{1}{\epsilon})+O(\epsilon^2)\\
 		 &<\frac{1}{N}S^{\frac{N}{2}}.
  	 \end{split}
    \end{equation*}
  	We complete the proof.
  \end{proof}
      \textbf{The case for} $\mathbf{N=4:}$

      Let $\varphi(x)\in C^\infty_{0}(\Omega) $ be a radial function satisfying that   $\varphi(x)=1$  for $0\le\left|x\right|\le\rho$, $0\le\varphi(x)\le1$ for $\rho\le\left|x\right|\le2\rho$,  $\varphi(x)=0$  for $x\in \Omega\setminus B_{2\rho}(0)$,
  where $0<\rho\le1$  with  $\log(\frac{1}{8e^{3-\frac{\lambda}{\mu}}\rho^2})>1$.

  Set
  $$\begin{array}{ll}
  	U_{\epsilon}=\varphi(x)u_{\epsilon}(x).
 \end{array}$$

   	\begin{lemma}
   If $N=4$,  then, as $\epsilon \to 0^+$,
   	  $$\int_{\Omega}U^2_{\epsilon}\log U^2_{\epsilon}\geq 8\log\left(\frac{8(\epsilon^2+\rho^2)}{e(\epsilon^2+4\rho^2)^2}\right)\omega_4\epsilon^2\log(\frac{1}{\epsilon})+O(\epsilon^2)  $$
   and
   $$ \int_{\Omega}U^2_{\epsilon}\log U^2_{\epsilon}\leq 8\log\left(\frac{8 e(\epsilon^2+4\rho^2)}{(\epsilon^2+\rho^2)^2}\right)\omega_4\epsilon^2\log(\frac{1}{\epsilon})+O(\epsilon^2).$$
   \end{lemma}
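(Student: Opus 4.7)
The strategy is to split $\int_\Omega U_\epsilon^2\log U_\epsilon^2$ according to the support structure of $\varphi$ and combine a sharp asymptotic expansion on the inner ball with pointwise one-sided estimates on the annulus. Concretely, I would write
\begin{equation*}
\int_\Omega U_\epsilon^2\log U_\epsilon^2\,dx
=\int_{B_\rho(0)}u_\epsilon^2\log u_\epsilon^2\,dx
+\int_{B_{2\rho}(0)\setminus B_\rho(0)}\varphi^2u_\epsilon^2\log(\varphi^2u_\epsilon^2)\,dx,
\end{equation*}
using $\varphi\equiv 1$ on $B_\rho$. The first step is to evaluate the inner-ball integral explicitly. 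Using $u_\epsilon^2=8\epsilon^2/(\epsilon^2+|x|^2)^2$ and $\log u_\epsilon^2=\log(8\epsilon^2)-2\log(\epsilon^2+|x|^2)$, passing to polar coordinates and substituting $s=\epsilon^2+r^2$ (so $r^3\,dr=\tfrac12(s-\epsilon^2)\,ds$) reduces everything to elementary integrals of $\log s/s$ and $\log s/s^2$. The critical computation is that the two leading contributions $\log(8\epsilon^2)\int_{B_\rho}u_\epsilon^2$ and $-2\int_{B_\rho}u_\epsilon^2\log(\epsilon^2+|x|^2)$ each carry a $(\log\epsilon)^2$ piece, but those pieces cancel, leaving a clean leading term of order $\epsilon^2\log(1/\epsilon)$ with an explicit logarithmic coefficient plus $O(\epsilon^2)$.

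Next I would handle the annular integral by expanding $\log(\varphi^2 u_\epsilon^2)=\log\varphi^2+\log u_\epsilon^2$. The purely $\varphi$-part satisfies $|\varphi^2\log\varphi^2|\le 1/e$ pointwise and $\int_{B_{2\rho}\setminus B_\rho}u_\epsilon^2\,dx=O(\epsilon^2)$ (direct computation, giving a $\log 2$ constant), so $\int_{B_{2\rho}\setminus B_\rho}\varphi^2u_\epsilon^2\log\varphi^2\,dx=O(\epsilon^2)$ and is absorbed into the remainder. For the remaining $\int_{B_{2\rho}\setminus B_\rho}\varphi^2u_\epsilon^2\log u_\epsilon^2\,dx$ I would exploit, for $\epsilon$ small, that $u_\epsilon^2\le 8\epsilon^2/(\epsilon^2+\rho^2)^2<1$ on the annulus, so $\log u_\epsilon^2<0$ there. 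This gives the upper bound $\int_{B_{2\rho}\setminus B_\rho}\varphi^2 u_\epsilon^2\log u_\epsilon^2\,dx\le 0$ at once (since $\varphi^2\ge 0$). For the lower bound I would use $\varphi^2\le 1$ together with the monotonicity of $\log$ and $u_\epsilon^2\ge 8\epsilon^2/(\epsilon^2+4\rho^2)^2$ on $B_{2\rho}$ to obtain
\begin{equation*}
\varphi^2 u_\epsilon^2\log u_\epsilon^2\;\ge\;u_\epsilon^2\log u_\epsilon^2\;\ge\;u_\epsilon^2\log\!\Bigl(\tfrac{8\epsilon^2}{(\epsilon^2+4\rho^2)^2}\Bigr),
\end{equation*}
and integrate, using the explicit $O(\epsilon^2)$ value of $\int_{B_{2\rho}\setminus B_\rho}u_\epsilon^2$.

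Finally, I would combine the sharp $B_\rho$ expansion with these one-sided annulus estimates. The upper bound follows essentially from the $B_\rho$ computation alone (since the annular contribution is $\le 0$ up to $O(\epsilon^2)$). The lower bound adds the annular estimate $\log\bigl(8\epsilon^2/(\epsilon^2+4\rho^2)^2\bigr)\cdot\int u_\epsilon^2$, and simple algebra of logarithms converts the result into the stated form $\log\bigl(8(\epsilon^2+\rho^2)/(e(\epsilon^2+4\rho^2)^2)\bigr)$. The factor $1/e$ in the argument is the imprint of the $-1$ coming from $\log\bigl((\epsilon^2+\rho^2)/\epsilon^2\bigr)-\rho^2/(\epsilon^2+\rho^2)$ in the inner-ball $L^2$ integral. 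The main technical obstacle is the cancellation of the $(\log\epsilon)^2$ terms in step one and tracking the remainder terms carefully so that all corrections arising from $\log(\epsilon^2+\rho^2)$, $\log(\epsilon^2+4\rho^2)$ and the annulus are genuinely $O(\epsilon^2)$; as long as one keeps only the $\epsilon^2\log(1/\epsilon)$ coefficient explicit and absorbs everything else into $O(\epsilon^2)$, the stated pair of inequalities falls out directly.
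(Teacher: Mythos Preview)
Your plan is correct and actually delivers slightly sharper estimates than the ones in the statement. The paper argues differently: rather than splitting the domain into $B_\rho$ (where $\varphi\equiv 1$ and one computes exactly) and the annulus $B_{2\rho}\setminus B_\rho$ (handled by crude pointwise bounds), it expands $\log U_\epsilon^2=\log 8+\log\varphi^2+2\log\bigl(\epsilon/(\epsilon^2+|x|^2)\bigr)$, rescales $x\mapsto \epsilon y$, and further splits the last contribution into a $\log(1/\epsilon)$ piece ($I_{11}$) and a $-\log(1+|y|^2)^2$ piece ($I_{12}$). The two-sided bounds are then obtained by sandwiching each $\varphi^2(\epsilon y)$-weighted integral over $B_{2\rho/\epsilon}$ between the unweighted integrals over $B_{\rho/\epsilon}$ and $B_{2\rho/\epsilon}$, exploiting the sign of the integrand. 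Both routes rely on the same cancellation of the $\epsilon^2(\log(1/\epsilon))^2$ terms and the same elementary primitives $\int\frac{\log s}{s}\,ds$, $\int\frac{\log s}{s^2}\,ds$.

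Your decomposition is more economical: the inner ball is settled once and the annulus is dispatched by a single sign observation (upper bound) or a single pointwise minorant (lower bound). Carrying it out yields the $\epsilon^2\log(1/\epsilon)$ coefficients $8\log\frac{8}{e(\epsilon^2+\rho^2)}$ for the upper bound and $8\log\frac{8}{e(\epsilon^2+4\rho^2)}$ for the lower bound, which are strictly tighter than the paper's $8\log\frac{8e(\epsilon^2+4\rho^2)}{(\epsilon^2+\rho^2)^2}$ and $8\log\frac{8(\epsilon^2+\rho^2)}{e(\epsilon^2+4\rho^2)^2}$ respectively; the stated inequalities thus follow a fortiori. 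So your remark that ``simple algebra of logarithms converts the result into the stated form'' is a slight overstatement --- you will not land exactly on those expressions but on stronger ones --- though this does not affect the validity of the proof.
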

  	  \begin{proof}Following from the definition of $U_\epsilon$, we have, as $\epsilon\to 0^+$,
   	  	\begin{equation}\label{3.4}
   \begin{split}
   	  		\int_{\Omega} U^2_{\epsilon}\log (U^2_{\epsilon})
   	  		 &=8\int_{\Omega}\varphi^2\left(\frac{\epsilon}{\epsilon^2+\left|x\right|^2}\right)^2\log\left[8\varphi^2(\frac{\epsilon}{\epsilon^2+\left|x\right|^2})^2\right]\mathrm{d}x\\
   	  		 &=8\int_{\Omega}\varphi^2\left(\frac{\epsilon}{\epsilon^2+\left|x\right|^2}\right)^2\log\left(\frac{\epsilon}{\epsilon^2+\left|x\right|^2}\right)^2\mathrm{d}x\\
   	  		 &+8\log(8)\int_{\Omega}\varphi^2\left(\frac{\epsilon}{\epsilon^2+\left|x\right|^2}\right)^2\mathrm{d}x\\
   	  		&+8\int_{\Omega\setminus {B_{\rho}(0)}}\varphi^2\left(\frac{\epsilon}{\epsilon^2+\left|x\right|^2}\right)^2\log\varphi^2\mathrm{d}x\\
   	  		 &=\uppercase\expandafter{\romannumeral1}_{1}+\uppercase\expandafter{\romannumeral1}_{2}+O(\epsilon^2).
   	  	  \end{split}
   \end{equation}
   By direct computation, we obtain
  	  	 \begin{equation}\label{bu3.5}
         \begin{split}
   	  	     \uppercase\expandafter{\romannumeral1}_{2}
   	  	     &=8\log8\int_{B_{\rho}(0)}\left(\frac{\epsilon}{\epsilon^2+\left|x\right|^2}\right)^2\mathrm{d}x+O(\epsilon^2)\\
   	  	     &=8\log8\omega_4\epsilon^2\int_{0}^{\rho/\epsilon}\frac{1}{(1+r^2)^2}r^3\mathrm{d}r+O(\epsilon^2)\\
   	  	     &=4\log8\omega_4\epsilon^2\left[\log(r^2+1)+\frac{1}{1+r^2}\right]{\Biggl\arrowvert}^{\rho/\epsilon}_{0}+O(\epsilon^2)\\
   	  	     &=4\log8\omega_4\epsilon^2\left[\log\left(\frac{\rho^2+\epsilon^2}{\epsilon^2}\right)+\frac{1}{1+\frac{\rho^2}{\epsilon^2}}-1\right]+O(\epsilon^2)\\
   	  	     &=8\log8\omega_4\epsilon^2\log(\frac{1}{\epsilon})+O(\epsilon^2),
   	  	  \end{split}
         \end{equation}
   	  	 \begin{equation}\label{bu3.6}
         \begin{split}
	  	    \uppercase\expandafter{\romannumeral1}_{1}	  	    &=8\int_{\Omega}\varphi^2\left(\frac{\epsilon}{\epsilon^2+\left|x\right|^2}\right)^2\log\left(\frac{\epsilon}{\epsilon^2+\left|x\right|^2}\right)^2\mathrm{d}r\\&=8\int_{B_{2\rho}(0)}\varphi^2\left(\frac{\epsilon}{\epsilon^2+\left|x\right|^2}\right)^2\log\left(\frac{\epsilon}{\epsilon^2+\left|x\right|^2}\right)^2\mathrm{d}x\\
   	  	    &=8\epsilon^2\int_{B_{{2\rho}/\epsilon}(0)}\varphi^2(\epsilon x)\frac{1}{(1+\left|x\right|^2)^2}\log\left(\frac{1}{\epsilon^2}\frac{1}{(1+\left|x\right|^2)^2}\right)\mathrm{d}x\\
   	  	    &=16\epsilon^2\log(\frac{1}{\epsilon})\int_{B_{{2\rho}/\epsilon}(0)}\varphi^2(\epsilon x)\frac{1}{(1+\left|x\right|^2)^2}\mathrm{d}x\\
  	  	    &\quad+8\epsilon^2\int_{B_{{2\rho}/\epsilon}(0)}\varphi^2(\epsilon x)\frac{1}{(1+\left|x\right|^2)^2}\log\frac{1}{(1+\left|x\right|^2)^2}\mathrm{d}x\\
   	  	    &\overset{\triangle}{=}\uppercase\expandafter{\romannumeral1}_{11}+\uppercase\expandafter{\romannumeral1}_{12},
   	  	  \end{split}
         \end{equation}
   where
  	  	  \begin{align}
 	  	  	\uppercase\expandafter{\romannumeral1}_{11}
   	  	  	&\ge 16\omega_4\epsilon^2\log(\frac{1}{\epsilon})\int_{0}^{\rho/\epsilon}\frac{1}{(1+r^2)^2}r^3\mathrm{d}r\notag\\
   	  	  	 &=8\omega_4\epsilon^2\log(\frac{1}{\epsilon})\left[\log(\frac{1}{\epsilon^2})+\log(\rho^2+\epsilon^2)+\frac{\epsilon^2}{\rho^2+\epsilon^2}-1\right]\notag\\
   	  	  	 &=16\omega_4\epsilon^2\left(\log(\frac{1}{\epsilon})\right)^2+8\omega_4\log\left(\frac{\rho^2+\epsilon^2}{e}\right)\epsilon^2\log(\frac{1}{\epsilon})
   	  	  	+O(\epsilon^4\log(\frac{1}{\epsilon})),\label{3.5}
   	  	  \end{align}
   	  	  \begin{align}
 	  	  	\uppercase\expandafter{\romannumeral1}_{11}
   	  	  	&\le 16\omega_4\epsilon^2\log(\frac{1}{\epsilon})\int_{0}^{2\rho/\epsilon}\frac{1}{(1+r^2)^2} r^3\mathrm{d}r\notag\\
   	  	  	 &=8\omega_4\epsilon^2\log(\frac{1}{\epsilon})\left[\log(\frac{1}{\epsilon^2})+\log(4\rho^2+\epsilon^2)+\frac{\epsilon^2}{4\rho^2+\epsilon^2}-1\right]\notag\\
   	  	  	&=16\omega_4\epsilon^2\left(\log(\frac{1}{\epsilon})\right)^2
   +8\omega_4\log\left(\frac{4\rho^2+\epsilon^2}{e}\right)\epsilon^2\log(\frac{1}{\epsilon})+O(\epsilon^4\log(\frac{1}{\epsilon})),\label{3.6}
   	  	  \end{align}
   \begin{align}
          	\uppercase\expandafter{\romannumeral1}_{12}
         	 &\ge-8\epsilon^2\int_{B_{{2\rho}/\epsilon}(0)}\frac{1}{(1+\left|x\right|^2)^2}\log(1+\left|x\right|^2)^2\mathrm{d}x\notag\\
          	 &=-16\omega_4\epsilon^2\int_{0}^{2\rho/\epsilon}\frac{1}{(1+r^2)^2}\log(1+r^2)r^3\mathrm{d}r\notag\\
          	 &=-8\omega_4\epsilon^2\int_{0}^{2\rho/\epsilon}\frac{r^2+1-1}{(1+r^2)^2}\log(1+r^2)\mathrm{d}(1+r^2)\notag\\
          	 &=-8\omega_4\epsilon^2\int_{0}^{2\rho/\epsilon}\frac{1}{1+r^2}\log(1+r^2)\mathrm{d}(1+r^2)\notag\\
          	 &+8\omega_4\epsilon^2\int_{0}^{2\rho/\epsilon}\frac{1}{(1+r^2)^2}\log(1+r^2)\mathrm{d}(1+r^2)\notag\\
         	&\ge - 4\omega_4\epsilon^2\left(\log (1+r^2)\right)^2{\Big\arrowvert}^{2\rho/\epsilon}_{0}\notag\\
          	&=-4\omega_4\epsilon^2\left(\log (1+\frac{4\rho^2}{\epsilon^2})\right)^2\notag\\
         	&=-4\omega_4\epsilon^2\left[\log (\epsilon^2+4\rho^2)+2\log(\frac{1}{\epsilon})\right]^2\notag\\
          	&=-16\omega_4\epsilon^2\left(\log(\frac{1}{\epsilon})\right)^2-16\omega_4\log (\epsilon^2+4\rho^2)\epsilon^2\log(\frac{1}{\epsilon})+O(\epsilon^2).\label{3.7}
          \end{align}
and
        \begin{align}
          	\uppercase\expandafter{\romannumeral1}_{12}
         	 &\le-8\epsilon^2\int_{B_{{\rho}/\epsilon}(0)}\frac{1}{(1+\left|x\right|^2)^2}\log(1+\left|x\right|^2)^2\mathrm{d}x\notag\\
          	 &=-16\omega_4\epsilon^2\int_{0}^{\rho/\epsilon}\frac{1}{(1+r^2)^2}\log(1+r^2)r^3\mathrm{d}r\notag\\
          	 &=-8\omega_4\epsilon^2\int_{0}^{\rho/\epsilon}\frac{r^2+1-1}{(1+r^2)^2}\log(1+r^2)\mathrm{d}(1+r^2)\notag\\
          	 &=-8\omega_4\epsilon^2\int_{0}^{\rho/\epsilon}\frac{1}{1+r^2}\log(1+r^2)\mathrm{d}(1+r^2)\notag\\
          	 &+8\omega_4\epsilon^2\int_{0}^{\rho/\epsilon}\frac{1}{(1+r^2)^2}\log(1+r^2)\mathrm{d}(1+r^2)\notag\\
         	&\le - 4\omega_4\epsilon^2\left(\log (1+r^2)\right)^2{\Big\arrowvert}^{\rho/\epsilon}_{0}+8\omega_4\epsilon^2\int_{0}^{\rho/\epsilon}\frac{1}{(1+r^2)}\mathrm{d}(1+r^2)\notag\\
         	&=-4\omega_4\epsilon^2\left[\log (\epsilon^2+\rho^2)+2\log(\frac{1}{\epsilon})\right]^2+8\omega_4\epsilon^2\left[\log (\epsilon^2+\rho^2)+2\log(\frac{1}{\epsilon})\right]\notag\\
          	&=-16\omega_4\epsilon^2\left(\log(\frac{1}{\epsilon})\right)^2-16\omega_4\log (\frac{\epsilon^2+\rho^2}{e})\epsilon^2\log(\frac{1}{\epsilon})+O(\epsilon^2).\label{3.8}
          \end{align}
        So, by \eqref{3.4}--\eqref{3.8}, we have that
          \begin{align*}
          	\int_{\Omega}U^2_{\epsilon}\log U^2_{\epsilon}
          &\ge 8\log8\omega_4\epsilon^2\ln(\frac{1}{\epsilon})+16\omega_4\epsilon^2\left(\log(\frac{1}{\epsilon})\right)^2+8\omega_4\log (\frac{\epsilon^2+\rho^2}{e})\epsilon^2\log(\frac{1}{\epsilon})\\
          	&-16\omega_4\epsilon^2\left(\log(\frac{1}{\epsilon})\right)^2-16\omega_4\log (\epsilon^2+4\rho^2)\epsilon^2\log(\frac{1}{\epsilon})+O(\epsilon^2)\\
          	 &=8\log\left(\frac{8(\epsilon^2+\rho^2)}{e(\epsilon^2+4\rho^2)^2}\right)\omega_4\epsilon^2\log(\frac{1}{\epsilon})+O(\epsilon^2)	
          \end{align*}
          and
         \begin{align*}
          	\int_{\Omega}U^2_{\epsilon}\log U^2_{\epsilon}
          &\le 8\log8\omega_4\epsilon^2\ln(\frac{1}{\epsilon})+16\omega_4\epsilon^2\left(\log(\frac{1}{\epsilon})\right)^2+8\omega_4\log (\frac{\epsilon^2+4\rho^2}{e})\epsilon^2\log(\frac{1}{\epsilon})\\
          	&-16\omega_4\epsilon^2\left(\log(\frac{1}{\epsilon})\right)^2-16\omega_4\log (\frac{\epsilon^2+\rho^2}{e})\epsilon^2\log(\frac{1}{\epsilon})+O(\epsilon^2)\\
          	&=8\log\left(\frac{8 e(\epsilon^2+4\rho^2)}{(\epsilon^2+\rho^2)^2}\right)\omega_4\epsilon^2\log(\frac{1}{\epsilon})+O(\epsilon^2).	
          \end{align*}
   	  \end{proof}
   \begin{lemma}\label{lm3.5}
   Assume that  $N=4$. If $ (\lambda, \mu)\in B_0\cup C_0$ and $\frac{32 e^\frac{\lambda}{\mu}}{\rho_{max}^2}< 1$, or $\lambda\in \R$ and $  \mu>0$ , then $c_M<\frac{1}{N}S^{\frac{N}{2}}$.
   \end{lemma}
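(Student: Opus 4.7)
The plan is to adapt the test-function argument of Lemma~\ref{lm3.3} to the $N=4$ setting, producing a family $tU_\epsilon$ along which the mountain pass supremum stays below $\frac{1}{4}S^2$.  First I set $g(t):=I(tU_\epsilon)$: by Lemma~\ref{Lemma1}, $g(0)=0$, $g(t)>0$ for small $t>0$, and $g(t)\to-\infty$ as $t\to+\infty$, so the supremum is attained at some $t_\epsilon\in(0,+\infty)$, and $g'(t_\epsilon)=0$ reads
\[
\int|\nabla U_\epsilon|^2 \;=\; t_\epsilon^2\!\int U_\epsilon^4 \;+\; \lambda\!\int U_\epsilon^2 \;+\; \mu\bigl(\log t_\epsilon^2\bigr)\!\int U_\epsilon^2 \;+\; \mu\!\int U_\epsilon^2\log U_\epsilon^2.
\]
Using the asymptotics $\int|\nabla U_\epsilon|^2,\int U_\epsilon^4\to S^2$ and $\int U_\epsilon^2=O(\epsilon^2|\log\epsilon|)=\int U_\epsilon^2\log U_\epsilon^2$, the same bookkeeping used in Lemma~\ref{lm3.3} will give $t_\epsilon\to 1$, and more precisely $t_\epsilon^2-1=O(\epsilon^2|\log\epsilon|)$.

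The next step exploits the special structure of dimension $N=4$ (where $2^*=4$): substituting the Euler--Lagrange relation into the definition of $I(t_\epsilon U_\epsilon)$ causes all but two terms to collapse, yielding the compact identity
\[
I(t_\epsilon U_\epsilon) \;=\; \frac{t_\epsilon^4}{4}\int U_\epsilon^4 \;+\; \frac{\mu\, t_\epsilon^2}{2}\int U_\epsilon^2.
\]
Applying the Euler--Lagrange relation a second time to rewrite $t_\epsilon^4\int U_\epsilon^4 = t_\epsilon^2\bigl(\int|\nabla U_\epsilon|^2-\lambda\int U_\epsilon^2-\mu(\log t_\epsilon^2)\int U_\epsilon^2-\mu\int U_\epsilon^2\log U_\epsilon^2\bigr)$ and plugging in the expansions above (with $\log t_\epsilon^2=O(\epsilon^2|\log\epsilon|)$ making the cross term negligible), I obtain the key expansion
\[
I(t_\epsilon U_\epsilon) \;=\; \frac{S^2}{4} \;+\; \frac{1}{2}\Bigl[(\mu-\lambda)\!\int U_\epsilon^2 \;-\; \mu\!\int U_\epsilon^2\log U_\epsilon^2\Bigr] \;+\; O(\epsilon^2).
\]

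The last step is to choose $\rho$ so that the bracket is strictly negative at leading order $\epsilon^2|\log\epsilon|$.  Using $\int U_\epsilon^2=8\omega_4\epsilon^2|\log\epsilon|+O(\epsilon^2)$ together with the two-sided estimates of the preceding lemma---which as $\epsilon\to 0^+$ yield $\int U_\epsilon^2\log U_\epsilon^2 \sim 8\omega_4 K_\rho\,\epsilon^2|\log\epsilon|$ with $\log\frac{1}{2e\rho^2}\le K_\rho\le\log\frac{32e}{\rho^2}$---the bracket becomes $4\omega_4\epsilon^2|\log\epsilon|\bigl[(\mu-\lambda)-\mu K_\rho\bigr]+O(\epsilon^2)$.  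For $\mu>0$ and any $\lambda\in\R$, the lower bound forces $K_\rho\to+\infty$ as $\rho\to 0^+$, so the bracket is made negative by shrinking $\rho$.  For $\mu<0$ with $(\lambda,\mu)\in B_0\cup C_0$, dividing the sign condition $(\mu-\lambda)-\mu K_\rho<0$ by the negative number $\mu$ (thereby flipping it) and applying the upper bound reduces the requirement to $\rho^2>32e^{\lambda/\mu}$; the standing hypothesis $\frac{32e^{\lambda/\mu}}{\rho_{max}^2}<1$ then guarantees that such a $\rho$ (compatible with $B_{2\rho}(0)\subset\Omega$) exists.  In either case $I(t_\epsilon U_\epsilon)<\frac{1}{4}S^2$ for all sufficiently small $\epsilon>0$, whence $c_M\le\sup_{t\ge 0}I(tU_\epsilon)<\frac{1}{N}S^{N/2}$.

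The main obstacle is this last step.  Unlike the $N\ge 5$ case of Lemma~\ref{lm3.3}, in $N=4$ both $\int U_\epsilon^2$ and $\int U_\epsilon^2\log U_\epsilon^2$ contribute at the same order $\epsilon^2|\log\epsilon|$, so there is no automatic sign: the leading coefficient of the correction depends genuinely on $(\lambda,\mu,\rho)$, and controlling it requires the precise logarithmic $\rho$-dependence in the bounds on $\int U_\epsilon^2\log U_\epsilon^2$.  This is exactly why the theorem must impose a geometric constraint on $\rho_{max}$ in the $\mu<0$ case while no analogous constraint is needed for $\mu>0$.
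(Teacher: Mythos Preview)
Your route is essentially correct but differs from the paper's, and your sketch of the ``key expansion'' hides a step that, as written, does not go through.  After the second substitution you have
\[
I(t_\epsilon U_\epsilon)=\frac{t_\epsilon^{2}}{4}\int|\nabla U_\epsilon|^2+\Bigl(\frac{\mu}{2}-\frac{\lambda}{4}\Bigr)t_\epsilon^{2}\!\int U_\epsilon^2-\frac{\mu t_\epsilon^{2}}{4}\!\int U_\epsilon^2\log U_\epsilon^2+O(\epsilon^2),
\]
and from $t_\epsilon^2-1=O(\epsilon^2|\log\epsilon|)$ alone the term $(t_\epsilon^2-1)\cdot\frac{1}{4}\int|\nabla U_\epsilon|^2$ is only $O(\epsilon^2|\log\epsilon|)$, the \emph{same} order as the correction you want to isolate.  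To reach your claimed $O(\epsilon^2)$ remainder (and your coefficients $\frac{\mu-\lambda}{2},-\frac{\mu}{2}$ rather than $\frac{\mu}{2}-\frac{\lambda}{4},-\frac{\mu}{4}$) you must feed the Euler--Lagrange relation back a \emph{third} time, using the exact expression $t_\epsilon^2-1=\bigl(-\lambda\int U_\epsilon^2-\mu\int U_\epsilon^2\log U_\epsilon^2+O(\epsilon^2)\bigr)/\int U_\epsilon^4$; then the $(t_\epsilon^2-1)$ piece contributes exactly the missing $-\frac{\lambda}{4}\int U_\epsilon^2-\frac{\mu}{4}\int U_\epsilon^2\log U_\epsilon^2$.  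Once this is done your expansion is correct and the sign analysis matches the paper's.

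The paper sidesteps this delicacy entirely: instead of an exact expansion it bounds
\[
\frac{t_\epsilon^2}{2}\int|\nabla U_\epsilon|^2-\frac{t_\epsilon^4}{4}\int U_\epsilon^4\le\Bigl(\frac{t_\epsilon^2}{2}-\frac{t_\epsilon^4}{4}\Bigr)S^2+O(\epsilon^2)\le\frac{S^2}{4}+O(\epsilon^2),
\]
using only that $\frac{t^2}{2}-\frac{t^4}{4}\le\frac14$ for all $t>0$ and that $t_\epsilon$ is bounded; no precise rate for $t_\epsilon-1$ is needed.  The remaining $\int U_\epsilon^2$ and $\int U_\epsilon^2\log U_\epsilon^2$ terms then give exactly your $\rho$-dependent sign conditions.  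Your compact identity $I(t_\epsilon U_\epsilon)=\frac{t_\epsilon^4}{4}\int U_\epsilon^4+\frac{\mu t_\epsilon^2}{2}\int U_\epsilon^2$ is a nice observation, but the inequality shortcut is more economical.  One further point: for $\mu<0$ you need $\rho^2>32e^{\lambda/\mu}$, and the paper simply takes $\rho=\rho_{\max}$; your parenthetical ``compatible with $B_{2\rho}(0)\subset\Omega$'' would force $\rho\le\rho_{\max}/2$ and cost a factor of $4$ against the hypothesis, so follow the paper's choice there.
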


      \begin{proof}
      	Let $g(t)\overset{\triangle}{=}I(tU_{\epsilon})$. Similar to the case of $N\geq 5$, we can find a  $t_{\epsilon}\in (0,+\infty)$ such that
     	\begin{align*}
            \sup_{t\ge0}I(tU_{\epsilon})=I(t_{\epsilon}U_{\epsilon})
      	\end{align*}
      	and
      	\begin{align*}
      	  \int\left|\nabla U_{\epsilon}\right|^2-t^{2^\ast-2}_{\epsilon}\int\left| U_{\epsilon}\right|^{2^\ast}-\lambda\int U_{\epsilon}^2-\mu\int U_{\epsilon}^2\log U^2_{\epsilon}-\mu\log t^2_{\epsilon}\int U^2_{\epsilon}=0.
      	\end{align*}
%
%
Similar to the case of $N\geq 5$ again,  we can see that there exists $0<C_2$ such that $t_\epsilon<C_2 $ for any $\mu\in \R\setminus\{0\}$ and there exists $C_1>0$ such that $t_\epsilon>C_1 $  for $\mu>0$.

\textbf{Case 1:} $\mu>0$

      So $$\mu\log t^2_{\epsilon}\int U^2_{\epsilon}=O(\epsilon^2\left|\ln\epsilon\right|),$$
and
      \begin{align*}
      	t^{2^\ast-2}_{\epsilon}
      	&=\frac{\int\left|\nabla U_{\epsilon}\right|^2-\lambda\int U_{\epsilon}^2-\mu\int U_{\epsilon}^2\log U^2_{\epsilon}-\mu\log t^2_{\epsilon}\int U^2_{\epsilon}}{\int\left|U_{\epsilon}\right|^{2^\ast}}\\
      	 &=\frac{S^{\frac{N}{2}}+O(\epsilon^2(\log(\frac{1}{\epsilon}))^2)}{S^{\frac{N}{2}}+O(\epsilon^N)}\longrightarrow 1 ~ as~ \epsilon\to 0^{+},
      \end{align*}
      which implies that,
      $$\mu\log t^2_{\epsilon}\int U^2_{\epsilon}=o(\epsilon^2\left|\ln\epsilon\right|),$$
      According to \eqref{bu3.5}, we get that
      $$\int_\Omega U_\epsilon^2=8\omega_4\epsilon^2\log(\frac{1}{\epsilon})+O(\epsilon^2).$$
      Therefore  we have that, as $\epsilon\to 0^+ ,$
      \begin{align*}
        c_M&\le I(t_{\epsilon}U_{\epsilon})\\
        &=\frac{t^2_\epsilon}{2}\int\left|\nabla U_{\epsilon}\right|^2
        -\frac{t^{2^\ast}_{\epsilon}}{2^\ast}\int\left| U_{\epsilon}\right|^{2^\ast}
        +\frac{\mu-\lambda}{2}t^2_{\epsilon}\int U^2_{\epsilon}
        -\frac{\mu}{2}t^2_{\epsilon}\int U^2_{\epsilon}\log U^2_{\epsilon}+o(\epsilon^2\left|\ln\epsilon\right|)\\
        &\le (\frac{t^2_\epsilon}{2}
        -\frac{t^{2^\ast}_{\epsilon}}{2^\ast})S^{\frac{N}{2}}
        +O(\epsilon^2) +\frac{\mu-\lambda}{2}t^2_{\epsilon}\int U^2_{\epsilon}
        -\frac{\mu}{2}t^2_{\epsilon}\int U^2_{\epsilon}\log U^2_{\epsilon}+o(\epsilon^2\left|\ln\epsilon\right|)\\
        &\le \frac{1}{N}S^{\frac{N}{2}}
        -\frac{t^2_{\epsilon}}{2}\int [\mu U^2_{\epsilon}\log U^2_{\epsilon}+(\lambda-\mu)U^2_{\epsilon}]
        +o(\epsilon^2\left|\ln\epsilon\right|)\\
        &\le \frac{1}{N}S^{\frac{N}{2}}
        -\frac{t^2_{\epsilon}}{2}\left(8\mu\log\left(\frac{8(\epsilon^2+\rho^2)}{e(\epsilon^2+4\rho^2)^2}\right)\omega_4\epsilon^2\log(\frac{1}{\epsilon})
        +(\lambda-\mu)8\omega_4\epsilon^2\log(\frac{1}{\epsilon})\right)
        +o(\epsilon^2\left|\ln\epsilon\right|)\\
        &\le \frac{1}{N}S^{\frac{N}{2}}
        -4 t^2_{\epsilon}\log\left(\frac{8^\mu(\epsilon^2+\rho^2)^\mu}{e^{2\mu-\lambda}(\epsilon^2+4\rho^2)^{2\mu}}\right)\omega_4\epsilon^2\log(\frac{1}{\epsilon})
        +o(\epsilon^2\left|\ln\epsilon\right|)\\
         &\le \frac{1}{N}S^{\frac{N}{2}}
        -4 t^2_{\epsilon}\log\left(\frac{8^\mu}{25^\mu e^{2\mu-\lambda}\rho^{2\mu}}\right)\omega_4\epsilon^2\log(\frac{1}{\epsilon})
        +o(\epsilon^2\left|\ln\epsilon\right|)\\
        &\le \frac{1}{N}S^{\frac{N}{2}}
        -4 C^2_{1}C\omega_4\epsilon^2\log(\frac{1}{\epsilon})+o(\epsilon^2\left|\ln\epsilon\right|)\\
        &<\frac{1}{N}S^{\frac{N}{2}},
      \end{align*}
where we choose $\rho>0$ small enough such that $\frac{8^\mu}{25^\mu e^{2\mu-\lambda}\rho^{2\mu}}>1.$

\textbf{Case 2: } $\mu<0$

When  $ (\lambda, \mu)\in B_0\cup C_0$, we choose $\rho=\rho_{max}$. We can see that $t_\epsilon \not \to 0$. Otherwise, $0<\alpha\leq c_M\le I(t_{\epsilon}U_{\epsilon})\to 0$, which is impossible.
Similar to Case 1, we can see that $t_\epsilon \to 1$ and $t_\epsilon^2\log t_\epsilon^2=o(1)$.
Therefore,
\begin{equation}\label{5221}
-\frac{\mu}{2}t_\epsilon^2\log t_\epsilon^2\int U_\epsilon^2=o(\epsilon^2\left|\ln\epsilon\right|).
\end{equation}

  Then we obtain that,  as $\epsilon\to 0^+ ,$
 \begin{align*}
        c_M&\le I(t_{\epsilon}U_{\epsilon})\\
        &=\frac{t^2_\epsilon}{2}\int\left|\nabla U_{\epsilon}\right|^2
        -\frac{t^{2^\ast}_{\epsilon}}{2^\ast}\int\left| U_{\epsilon}\right|^{2^\ast}
        +\frac{\mu}{2}t^2_{\epsilon}(1-\log t^2_{\epsilon})\int U^2_{\epsilon}
        -\frac{\mu}{2}t^2_{\epsilon}\int U^2_{\epsilon}(\log U^2_{\epsilon}+\frac{\lambda}{\mu})\\
        &\le (\frac{t^2_\epsilon}{2}
        -\frac{t^{2^\ast}_{\epsilon}}{2^\ast})S^{\frac{N}{2}}
        +\frac{\mu}{2} t^2_{\epsilon}\int U^2_{\epsilon}
        -\frac{\mu}{2}t^2_{\epsilon}\int U^2_{\epsilon}\log U^2_{\epsilon}-\frac{\lambda}{2}t^2_{\epsilon}\int U^2_{\epsilon}+o(\epsilon^2\left|\ln\epsilon\right|)\\
        &\le \frac{1}{N}S^{\frac{N}{2}}
        -\frac{\mu}{2}t^2_{\epsilon}\left( (\frac{\lambda}{\mu}-1)8\omega_4\epsilon^2\log(\frac{1}{\epsilon})+ 8\log\left(\frac{8 e(\epsilon^2+4\rho^2)}{(\epsilon^2+\rho^2)^2}\right)\omega_4\epsilon^2\log(\frac{1}{\epsilon})  \right)
        +o(\epsilon^2\left|\ln\epsilon\right|)\\
           &\le \frac{1}{N}S^{\frac{N}{2}}
        -4\mu \omega_4 t^2_{\epsilon}\left( (\frac{\lambda}{\mu}-1)+ \log\left(\frac{8 e(\epsilon^2+4\rho^2)}{(\epsilon^2+\rho^2)^2}\right)   \right)\epsilon^2\log(\frac{1}{\epsilon})
        +o(\epsilon^2\left|\ln\epsilon\right|)\\
         &\le \frac{1}{N}S^{\frac{N}{2}}
        -4\mu \omega_4 t^2_{\epsilon} \log\left(\frac{8 e^\frac{\lambda}{\mu}(\epsilon^2+4\rho^2)}{(\epsilon^2+\rho^2)^2}\right)   \epsilon^2\log(\frac{1}{\epsilon})
        +o(\epsilon^2\left|\ln\epsilon\right|)\\
        &\le \frac{1}{N}S^{\frac{N}{2}}
        -4\mu \omega_4  \log\left(\frac{32 e^\frac{\lambda}{\mu}}{\rho^2}\right)   \epsilon^2\log(\frac{1}{\epsilon})
        +o(\epsilon^2\left|\ln\epsilon\right|)\\
        &<\frac{1}{N}S^{\frac{N}{2}}
      \end{align*}
since the fact that $\frac{32 e^\frac{\lambda}{\mu}}{\rho_{max}^2}< 1$.

We complete the proof.

\end{proof}

\textbf{The case for} $\mathbf{N=3:}$

      Let $\varphi(x)\in C^1_{0}(\Omega) $ be a radial function satisfying  that  $\varphi(x)=1$  for $0\le\left|x\right|\le\rho$, $0\le\varphi(x)\le1$ for $\rho\le\left|x\right|\le2\rho$,  $\varphi(x)=0$  for $x\in \Omega\setminus B_{2\rho}(0)$,
  where $0<\rho$ is any fixed  constant such that $B_{2\rho}(0)\subset\Omega$ and $4\rho^2<1.$

  Set
  $$\begin{array}{ll}
  	U_{\epsilon}=\varphi(x)u_{\epsilon}(x).
  \end{array}$$
\begin{lemma}\label{lm3.6}
If $N=3$, then we have, as $\epsilon\to 0^+$,
\begin{equation}\label{3.8}
            \int_\Omega\left|\nabla U_\epsilon\right|^2\mathrm{d}x=S^\frac{3}{2}+\sqrt{3}\omega_3\int_{\rho}^{2\rho}\left|\varphi^{'}(r)\right|^2\mathrm{d}r\epsilon+O(\epsilon^3),
	    \end{equation}
	\begin{equation}\label{3.9}
		   \int_\Omega\left|U_\epsilon\right|^{2^*}\mathrm{d}x
	  =S^\frac{3}{2}+O(\epsilon^3),
   \end{equation}

   \begin{equation}\label{3.10}
	 	   \int_{\Omega}U^2_\epsilon \mathrm{d}x
	  =\sqrt{3}\omega_3\ds\int_{0}^{2\rho}\varphi^2~\mathrm{d}r\epsilon+O(\epsilon^2),
	 \end{equation}
and
\begin{equation}\label{3.11}
\int U^2_\epsilon\log U^2_\epsilon \mathrm{d}x =\sqrt{3}\omega_3\ds\int_{0}^{2\rho}\varphi^2~\mathrm{d}r\epsilon\log\epsilon+O(\epsilon),
\end{equation}
where $\omega_3$ denotes the area of the  unit sphere surface.
\end{lemma}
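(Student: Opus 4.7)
The plan is to derive each asymptotic by direct computation, splitting $\Omega$ into $B_\rho(0)$, $B_{2\rho}(0)\setminus B_\rho(0)$, and $\Omega\setminus B_{2\rho}(0)$, and using the explicit formulas $u_\epsilon(x)=3^{1/4}\epsilon^{1/2}(\epsilon^2+|x|^2)^{-1/2}$ and $\nabla u_\epsilon(x)=-3^{1/4}\epsilon^{1/2}x(\epsilon^2+|x|^2)^{-3/2}$. On the innermost region $U_\epsilon=u_\epsilon$, on the outermost $U_\epsilon\equiv 0$, so the work reduces to a main contribution on $B_\rho(0)$ plus a correction on the annulus. The recurring tools will be polar coordinates, the substitution $r=\epsilon s$ when needed, and the uniform expansion $(\epsilon^2+r^2)^{-k}=r^{-2k}(1+O(\epsilon^2/r^2))$ for $r\ge\rho$. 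Estimate \eqref{3.9} follows immediately: $\int_{B_\rho}u_\epsilon^6=S^{3/2}-\int_{\R^3\setminus B_\rho}u_\epsilon^6=S^{3/2}+O(\epsilon^3)$, since $u_\epsilon^6\le C\epsilon^3/r^6$ for $|x|\ge\rho$; the annular contribution is similarly $O(\epsilon^3)$. For \eqref{3.10}, polar coordinates give $\int U_\epsilon^2=\sqrt{3}\omega_3\epsilon\int_0^{2\rho}\varphi^2 r^2/(\epsilon^2+r^2)\,dr$, and writing $r^2/(\epsilon^2+r^2)=1-\epsilon^2/(\epsilon^2+r^2)$ separates the main term from an $O(\epsilon^2)$ remainder, since $\epsilon\int_0^{2\rho}\epsilon^2/(\epsilon^2+r^2)\,dr=\epsilon^2\arctan(2\rho/\epsilon)=O(\epsilon^2)$.

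For \eqref{3.11}, decompose $\int U_\epsilon^2\log U_\epsilon^2=\int\varphi^2 u_\epsilon^2\log\varphi^2+\int\varphi^2 u_\epsilon^2\log u_\epsilon^2$. The first piece is $O(\epsilon)$ because $s^2\log s^2$ is bounded on $[0,1]$ and $u_\epsilon^2=O(\epsilon)$ on the annulus (the only region where $\varphi\ne 1$). In the second, $\log u_\epsilon^2=\log\sqrt{3}+\log\epsilon-\log(\epsilon^2+|x|^2)$; the $\log\epsilon$ term combined with \eqref{3.10} produces the desired main term $\sqrt{3}\omega_3\epsilon\log\epsilon\int_0^{2\rho}\varphi^2\,dr$ with error $O(\epsilon^2|\log\epsilon|)\subset O(\epsilon)$; the constant term contributes only $O(\epsilon)$; and the remaining integral $\int\varphi^2 u_\epsilon^2\log(\epsilon^2+|x|^2)$ is shown to be $O(\epsilon)$ by polar coordinates followed by $r=\epsilon s$, using $\int_0^\infty\log(1+s^2)/(1+s^2)\,ds<\infty$ to control the dimensionless integral that arises.

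The main obstacle is \eqref{3.8}, where several $O(\epsilon)$ contributions must combine exactly to leave the coefficient $\int_\rho^{2\rho}|\varphi'|^2\,dr$. I split $\int_\Omega|\nabla U_\epsilon|^2=\int_{B_\rho}|\nabla u_\epsilon|^2+\int_{B_{2\rho}\setminus B_\rho}|\nabla(\varphi u_\epsilon)|^2$. For the first term, $\int_{B_\rho}|\nabla u_\epsilon|^2=S^{3/2}-\int_{\R^3\setminus B_\rho}|\nabla u_\epsilon|^2=S^{3/2}-\sqrt{3}\omega_3\epsilon/\rho+O(\epsilon^3)$, using $\int_\rho^\infty r^4/(\epsilon^2+r^2)^3\,dr=1/\rho+O(\epsilon^2)$. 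On the annulus, expand $|\nabla(\varphi u_\epsilon)|^2=\varphi^2|\nabla u_\epsilon|^2+2\varphi u_\epsilon\nabla\varphi\cdot\nabla u_\epsilon+u_\epsilon^2|\nabla\varphi|^2$; in polar coordinates these contribute, to leading order, $\sqrt{3}\omega_3\epsilon$ times $\int_\rho^{2\rho}\varphi^2/r^2\,dr$, $-2\int_\rho^{2\rho}\varphi\varphi'/r\,dr$, and $\int_\rho^{2\rho}|\varphi'|^2\,dr$ respectively, with $O(\epsilon^3)$ corrections. The key cancellation comes from the integration by parts $\int_\rho^{2\rho}\varphi\varphi'/r\,dr=(1/2)[\varphi^2/r]_\rho^{2\rho}+(1/2)\int_\rho^{2\rho}\varphi^2/r^2\,dr=-1/(2\rho)+(1/2)\int_\rho^{2\rho}\varphi^2/r^2\,dr$ (using $\varphi(\rho)=1$ and $\varphi(2\rho)=0$), which forces the $\int\varphi^2/r^2\,dr$ terms to cancel and contributes a $+\sqrt{3}\omega_3\epsilon/\rho$ that exactly offsets the tail, leaving $S^{3/2}+\sqrt{3}\omega_3\epsilon\int_\rho^{2\rho}|\varphi'(r)|^2\,dr+O(\epsilon^3)$.
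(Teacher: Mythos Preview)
Your approach is essentially identical to the paper's: the same three-term expansion of $|\nabla(\varphi u_\epsilon)|^2$, the same integration by parts $\int_\rho^{2\rho}(\varphi^2)'/r\,dr=-1/\rho+\int_\rho^{2\rho}\varphi^2/r^2\,dr$ to produce the cancellation in \eqref{3.8}, and the same decomposition $\log u_\epsilon^2=\log\sqrt3+\log\epsilon-\log(\epsilon^2+|x|^2)$ for \eqref{3.11}.

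There is one slip in your justification of the final $O(\epsilon)$ term in \eqref{3.11}. After the substitution $r=\epsilon s$, the integrand you must control is $\dfrac{s^2}{1+s^2}\log(1+s^2)$, which behaves like $2\log s$ at infinity and is \emph{not} dominated by $\dfrac{\log(1+s^2)}{1+s^2}$; the convergence of $\int_0^\infty\frac{\log(1+s^2)}{1+s^2}\,ds$ is therefore irrelevant. What actually happens after the substitution is that a $4\rho\log\epsilon$ contribution from the $\log\epsilon^2$ factor is cancelled by a matching $-4\rho\log\epsilon$ from the growth of $\int_0^{2\rho/\epsilon}\frac{s^2}{1+s^2}\log(1+s^2)\,ds$. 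The simplest fix---and what the paper does---is to avoid the substitution altogether and bound directly:
\[
\Bigl|\int_0^{2\rho}\varphi^2\,\frac{r^2}{\epsilon^2+r^2}\log(\epsilon^2+r^2)\,dr\Bigr|
\le \int_0^{2\rho}\bigl|\log(\epsilon^2+r^2)\bigr|\,dr
= -r\log(\epsilon^2+r^2)\Big|_0^{2\rho}+\int_0^{2\rho}\frac{2r^2}{\epsilon^2+r^2}\,dr
= O(1),
\]
which gives the desired $O(\epsilon)$ after multiplication by $\sqrt3\,\omega_3\epsilon$.
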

\begin{proof} Following from the definition of $U_\epsilon$, direct computations implies that
\begin{equation*}
   \begin{split}
   &\int_\Omega\left|\nabla U_\epsilon\right|^2\mathrm{d}x\\
	 &=\int_{B_{2\rho}}\left(\left|\nabla\varphi\right|^2\frac{\sqrt{3}\epsilon}{\epsilon^2+\left|x\right|^2}-2\nabla\varphi\cdot\varphi(r)\frac{\sqrt{3}\epsilon x}{(\epsilon^2+\left|x\right|^2)^2}+\frac{\sqrt{3}\epsilon\varphi^2(r) x^2}{(\epsilon^2+\left|x\right|^2)^3}\right)\mathrm{d}x\\
	 &=\sqrt{3}\omega_3\epsilon\int_{\rho}^{2\rho}\left|\varphi^{'}(r)\right|^2\frac{r^2}{\epsilon^2+r^2}\mathrm{d}r-2\sqrt{3}\omega_3\epsilon\int_{\rho}^{2\rho}\varphi^{'}(r)r\varphi(r)\frac{r^2}{(\epsilon^2+r^2)^2}\mathrm{d}r\\
	 &+\sqrt{3}\omega_3\epsilon\int_{0}^{\rho}\frac{r^4}{(\epsilon^2+r^2)^3}\mathrm{d}r+\sqrt{3}\omega_3\epsilon\int_{\rho}^{2\rho}\frac{\varphi^2(r) r^4}{(\epsilon^2+r^2)^3}\mathrm{d}r\\
	 &=\sqrt{3}\omega_3\epsilon\int_{\rho}^{2\rho}\left|\varphi^{'}(r)\right|^2\mathrm{d}r+O(\epsilon^3)
-2\sqrt{3}\omega_3\epsilon\int_{\rho}^{2\rho}\varphi^{'}(r)r\varphi(r)\frac{1}{\epsilon^2+r^2}\mathrm{d}r+O(\epsilon^3)\\
	&+\sqrt{3}\omega_3\epsilon\int_{0}^{+\infty }\frac{r^4}{(\epsilon^2+r^2)^3}\mathrm{d}r-\sqrt{3}\omega_3\epsilon\int_{\rho}^{+\infty }\frac{r^4}{(\epsilon^2+r^2)^3}\mathrm{d}r+\sqrt{3}\omega_3\epsilon\int_{\rho}^{2\rho}\frac{\varphi^2(r) r^4}{(\epsilon^2+r^2)^3}\mathrm{d}r\\
	 &=\sqrt{3}\omega_3\epsilon\int_{\rho}^{2\rho}\left|\varphi^{'}(r)\right|^2\mathrm{d}r-2\sqrt{3}\omega_3\epsilon\int_{\rho}^{2\rho}\varphi^{'}(r)\varphi(r)\frac{1}{r}\mathrm{d}r+O(\epsilon^3)\\
	&+\int_{\R^N}\left|\nabla u_\epsilon\right|^2-\sqrt{3}\omega_3\epsilon\int_{\rho}^{+\infty}\frac{1}{\epsilon^2 +r^2}\mathrm{d}r
+\sqrt{3}\omega_3\epsilon\int_{\rho}^{2\rho}\varphi^2(r)\frac{1}{\epsilon^2 +r^2}\mathrm{d}r+O(\epsilon^3)\\
	&=S^\frac{3}{2}+O(\epsilon^3)\\
&+\sqrt{3}\omega_3\epsilon\left(\int_{\rho}^{2\rho}\left|\varphi^{'}(r)\right|^2\mathrm{d}r-\int_{\rho}^{2\rho}2\varphi^{'}(r)\varphi(r)\frac{1}{r}\mathrm{d}r-\int_{\rho}^{+\infty}\frac{1}{r^2}\mathrm{d}r+\int_{\rho}^{2\rho}\varphi^2(r)\frac{1}{r^2}\mathrm{d}r\right)\\	 &=S^\frac{3}{2}+\sqrt{3}\omega_3\epsilon\int_{\rho}^{2\rho}\left|\varphi^{'}(r)\right|^2\mathrm{d}r+O(\epsilon^3)
\end{split}
\end{equation*}
%
	$$
	 	   \int_\Omega\left|U_\epsilon\right|^{2^*}\mathrm{d}x
	   =\int_{B_{\rho}(0)}\left|u_\epsilon\right|^{2^*}\mathrm{d}x+O(\epsilon^3)
=\int_{\mathbb{R}^3}\left|u_\epsilon\right|^{2^*}\mathrm{d}x+O(\epsilon^3)=S^\frac{3}{2}+O(\epsilon^3),
   $$
and
$$\begin{array}{ll}
	  \ds \int_{\Omega}U^2_\epsilon \mathrm{d}x
	   &=\sqrt{3}\ds\int_{B_{2\rho}(0)}\varphi^2\frac{\epsilon}{\epsilon^2+\left|x\right|^2}\mathrm{d}x\\
&= \sqrt{3}\omega_3\epsilon\ds\int_{0}^{2\rho}\varphi^2\frac{1}{\epsilon^2+r^2}r^2\mathrm{d}r\\
&=\sqrt{3}\omega_3\epsilon\ds\int_{0}^{2\rho}\varphi^2~\mathrm{d}r-\sqrt{3}\omega_3\epsilon^3\int_{0}^{2\rho}\varphi^2\frac{1}{\epsilon^2+r^2}~\mathrm{d}r\\
&=\sqrt{3}\omega_3\epsilon\ds\int_{0}^{2\rho}\varphi^2~\mathrm{d}r+O(\epsilon^2).
\end{array}$$

%
On the other hand, we have
   \begin{equation}\label{3.12}
     \begin{split}
     \int_{\Omega} U_\epsilon^2\log{U_\epsilon^2}&=\sqrt{3}\int_{\Omega}\varphi^2\frac{\epsilon}{\epsilon^2+\left|x\right|^2}\log(\sqrt{3}\varphi^2\frac{\epsilon}{\epsilon^2+\left|x\right|^2})\mathrm{d}x\\
       &=\sqrt{3}\int_{B_{2\rho}(0)}\varphi^2\frac{\epsilon}{\epsilon^2+\left|x\right|^2}\log(\sqrt{3}\varphi^2\frac{\epsilon}{\epsilon^2+\left|x\right|^2})\mathrm{d}x\\
       &=\sqrt{3}\omega_3\epsilon\int_{0}^{2\rho}\varphi^2(r)\frac{r^2}{\epsilon^2+r^2}\left[\log\sqrt{3}+\log\epsilon+\log\varphi^2+\log\frac{1}{\epsilon^2+r^2}\right]\mathrm{d}r\\
       &=\sqrt{3}\log\sqrt{3}\omega_3\epsilon\int_{0}^{2\rho}\varphi^2(r)\frac{r^2}{\epsilon^2+r^2}\mathrm{d}r\\
       &+\sqrt{3}\omega_3\epsilon\log\epsilon\int_{0}^{2\rho}\varphi^2\frac{r^2}{\epsilon^2+r^2}\mathrm{d}r\\
       &+\sqrt{3}\omega_3\epsilon\int_{0}^{2\rho}\varphi^2\log\varphi^2\frac{r^2}{\epsilon^2+r^2}\mathrm{d}r\\
       &+\sqrt{3}\omega_3\epsilon\int_{0}^{2\rho}\varphi^2(r)\frac{r^2}{\epsilon^2+r^2}\log\frac{1}{\epsilon^2+r^2}\mathrm{d}r\\
       &\overset{\triangle}{=}I_1+I_2+I_3+I_4.
     \end{split}
   \end{equation}

   By direct computation, we obtain that
   \begin{equation}\label{3.13}
   I_1=O(\epsilon),~I_3=O(\epsilon),
   \end{equation}
   \begin{equation}\label{3.14}
    \begin{array}{ll}       I_2
      & =\sqrt{3}\omega_3\epsilon\log\epsilon \ds\int_{0}^{2\rho}\varphi^2\mathrm{d}r-
       \sqrt{3}\omega_3\epsilon\log\epsilon \ds\int_{0}^{2\rho}\varphi^2\frac{\epsilon^2}{\epsilon^2+r^2}\mathrm{d}r\\
       &=\sqrt{3}\omega_3\epsilon\log\epsilon \ds\int_{0}^{2\rho}\varphi^2\mathrm{d}r+O(\epsilon^2\log\epsilon),\\
       \end{array}
        \end{equation}
and
   \begin{equation}\label{3.15}
     \begin{split}
       |I_4|
       &\leq -\sqrt{3}\omega_3\epsilon\int_{0}^{\rho} \log(\epsilon^2+r^2)\mathrm{d}r +O(\epsilon)\\
       &=-\sqrt{3}\omega_3\epsilon r\log(\epsilon^2+r^2){\Big\arrowvert}^{\rho}_{0}+\sqrt{3}\omega_3\epsilon\int_{0}^{\rho}r\frac{1}{\epsilon^2+r^2}2r\mathrm{d}r+O(\epsilon)\\
       &=O(\epsilon).
     \end{split}
   \end{equation}

%
   It follows from \eqref{3.12}--\eqref{3.15} that
   $$\int U^2_\epsilon\log U^2_\epsilon \mathrm{d}x =\sqrt{3}\omega_3\int_{0}^{2\rho}\varphi^2\mathrm{d}r\epsilon\log\epsilon+O(\epsilon). $$

We complete the proof.

   \end{proof}

   \begin{lemma}\label{lm3.71}
   	If $N=3 $ and $ (\lambda, \mu)\in B_0\cup C_0$, then  $c_M<\frac{1}{N}S^{\frac{N}{2}}$.
   \end{lemma}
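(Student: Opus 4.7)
The plan is to adapt the scheme of Lemma~\ref{lm3.5} (the $N=4$, $\mu<0$ case) to the three-dimensional estimates collected in Lemma~\ref{lm3.6}. The crucial structural observation is that in $N=3$ the cutoff-induced error in $\|U_\epsilon\|^2$ is only of order $\epsilon$, whereas the logarithmic term in $I(tU_\epsilon)$ produces a negative correction of the strictly larger order $\epsilon|\log\epsilon|$ whenever $\mu<0$. Concretely, I set $g(t):=I(tU_\epsilon)$ with the test function $U_\epsilon=\varphi u_\epsilon$ fixed just before Lemma~\ref{lm3.6}. By Lemma~\ref{Lemma1}, $g(0)=0$, $g$ is positive on the sphere $\|v\|=\rho$ there, and $g(t)\to -\infty$ as $t\to+\infty$; thus there exists $t_\epsilon\in(0,+\infty)$ with $g(t_\epsilon)=\sup_{t\ge 0}g(t)$. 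Dividing $g'(t_\epsilon)=0$ by $t_\epsilon$ gives
\[
\|U_\epsilon\|^2=t_\epsilon^{2^*-2}\int U_\epsilon^{2^*}+\lambda\int U_\epsilon^2+\mu\log t_\epsilon^2\int U_\epsilon^2+\mu\int U_\epsilon^2\log U_\epsilon^2.
\]
As in Case~2 of Lemma~\ref{lm3.5}, the lower bound $c_M\ge \alpha>0$ from Lemma~\ref{Lemma1} forces $t_\epsilon$ away from $0$, and a direct manipulation of the above Euler relation forces $t_\epsilon$ bounded from above. Inserting \eqref{3.8}--\eqref{3.11} into this relation then yields $t_\epsilon^{2^*-2}=1+O(\epsilon|\log\epsilon|)$, so $t_\epsilon\to 1$ and $t_\epsilon^2\log t_\epsilon^2=o(1)$ as $\epsilon\to 0^+$.

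Next I estimate $I(t_\epsilon U_\epsilon)$ by splitting it into its Sobolev part and a remainder. Using the elementary identity $\sup_{t\ge 0}(\tfrac{t^2}{2}A-\tfrac{t^{2^*}}{2^*}B)=\tfrac{1}{N}A^{N/2}B^{-(N-2)/2}$ together with \eqref{3.8} and \eqref{3.9},
\[
\sup_{t\ge 0}\Bigl(\tfrac{t^2}{2}\|U_\epsilon\|^2-\tfrac{t^{2^*}}{2^*}\int U_\epsilon^{2^*}\Bigr)=\tfrac{1}{3}S^{3/2}+\tfrac{\sqrt{3}}{2}\omega_3\int_\rho^{2\rho}|\varphi'(r)|^2\,dr\cdot\epsilon+O(\epsilon^2),
\]
so the Sobolev part overshoots $\tfrac{1}{N}S^{N/2}$ only by an $O(\epsilon)$ amount arising from the cutoff. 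The remaining lower-order contributions of $I(t_\epsilon U_\epsilon)$ reduce, by $t_\epsilon\to 1$, $t_\epsilon^2\log t_\epsilon^2=o(1)$ and $\int U_\epsilon^2=O(\epsilon)$, to $-\tfrac{\mu}{2}\int U_\epsilon^2\log U_\epsilon^2+O(\epsilon)$, which by \eqref{3.11} equals $-\tfrac{\mu}{2}\sqrt{3}\omega_3\bigl(\int_0^{2\rho}\varphi^2\,dr\bigr)\epsilon\log\epsilon+O(\epsilon)$. Since $\mu<0$ and $\log\epsilon<0$ for small $\epsilon>0$, this term is strictly negative of order $\epsilon|\log\epsilon|$, and summing the two contributions gives
\[
c_M\le I(t_\epsilon U_\epsilon)\le \tfrac{1}{3}S^{3/2}-\tfrac{|\mu|\sqrt{3}}{2}\omega_3\Bigl(\int_0^{2\rho}\varphi^2\,dr\Bigr)\epsilon|\log\epsilon|+O(\epsilon),
\]
which is strictly less than $\tfrac{1}{N}S^{N/2}=\tfrac{1}{3}S^{3/2}$ for all sufficiently small $\epsilon>0$, since $\epsilon|\log\epsilon|$ dominates $\epsilon$.

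The main technical obstacle I foresee is establishing the two-sided bounds on $t_\epsilon$, since the Euler equation couples $t_\epsilon$ nonlinearly through both $t_\epsilon^{2^*-2}$ and $\log t_\epsilon^2$. However, the argument used in Case~2 of Lemma~\ref{lm3.5} applies once the four size estimates of Lemma~\ref{lm3.6} are in hand. It is worth emphasizing that, in contrast with the $N=4$ situation, no smallness constraint of the form $32 e^{\lambda/\mu}\rho_{\max}^{-2}<1$ is required here: in three dimensions the cutoff error in $\|U_\epsilon\|^2$ is only $O(\epsilon)$, which is strictly dominated by the $|\mu|\epsilon|\log\epsilon|$ gain produced by the logarithmic term, so $c_M<\tfrac{1}{N}S^{N/2}$ holds uniformly throughout $(\lambda,\mu)\in B_0\cup C_0$.
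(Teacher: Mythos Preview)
Your proposal is correct and follows essentially the same scheme as the paper: pick the truncated Aubin--Talenti profile $U_\epsilon$, locate the maximizer $t_\epsilon$ of $t\mapsto I(tU_\epsilon)$, control $t_\epsilon$, and then observe that the logarithmic term contributes a negative correction of order $\epsilon|\log\epsilon|$ which dominates the $O(\epsilon)$ cutoff error in three dimensions. The paper's proof is slightly more economical: it only uses that $t_\epsilon\in(C_1,C_2)$ and the elementary bound $\tfrac{t^2}{2}-\tfrac{t^{2^*}}{2^*}\le \tfrac{1}{N}$, so neither the convergence $t_\epsilon\to 1$ nor the precise supremum formula $\tfrac{1}{N}A^{N/2}B^{-(N-2)/2}$ is needed---boundedness of $t_\epsilon$ already makes every term except $-\tfrac{\mu}{2}t_\epsilon^2\int U_\epsilon^2\log U_\epsilon^2$ of order $O(\epsilon)$.
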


\begin{proof}
Assume that $g(t):=I(tU_{\epsilon})$.
   Since $g(0)=0$, $\lim\limits_{t\to +\infty }g(t)=-\infty$ and  Lemma \ref{Lemma1}, we can get a $t_{\epsilon}\in (0,+\infty)$ such that
     	\begin{align*}
            \sup_{t\ge0}I(tU_{\epsilon})=I(t_{\epsilon}U_{\epsilon}).
      	\end{align*}

Similar to the case of $N=4,$  we can see that there exist $0<C_1<C_2$ such that $t_\epsilon\in (C_1, C_2).$
Therefore, for $\epsilon$ small enough,
      \begin{align*}
        c_M\le I(t_{\epsilon}U_{\epsilon})
        &=\frac{t^2_\epsilon}{2}\int\left|\nabla U_{\epsilon}\right|^2
        -\frac{t^{2^\ast}_{\epsilon}}{2^\ast}\int\left| U_{\epsilon}\right|^{2^\ast}
        -\frac{\mu}{2}t^2_{\epsilon}\int U^2_{\epsilon}\log U^2_{\epsilon}+O(\epsilon)\\
        &= (\frac{t^2_\epsilon}{2}
        -\frac{t^{2^\ast}_{\epsilon}}{2^\ast})S^{\frac{3}{2}}
        -\frac{\mu}{2}t^2_{\epsilon}\int U^2_{\epsilon}\log U^2_{\epsilon}+O(\epsilon)\\
        &\le \frac{1}{N}S^{\frac{3}{2}}
        -\frac{\sqrt{3}\mu}{2} C^2_{1}\omega_3\ds\int_{0}^{2\rho}\varphi^2~\mathrm{d}r\epsilon\log(\epsilon)+O(\epsilon)\\
        &<\frac{1}{N}S^{\frac{3}{2}}.
      \end{align*}
We complete the proof.
   \end{proof}

\section{The Proof of Main Theorems}\label{section:proofs}

  \begin{proof}[\textbf{The proof of Theorem \ref{th1}:}] Assume that $N\geq 4$ and  $(\lambda, \mu)\in A_0$.
   	   By Lemma \ref{Lemma1} and the mountain-pass theorem, there exists a sequence $\{u_n\}\subset H^1_{0}(\Omega)$ such that, as $n \to \infty$,
   	   \begin{align*}
   	   I(u_{n})&\rightarrow c_M,\\
   	   I^{'}(u_n)&\rightarrow 0,\quad in\quad (H^1_{0}(\Omega))^{-1},
   	   \end{align*}
   	    which, together with Lemma \ref{lm2.3}, implies that $\{u_n\}$ is bounded in $H^1_{0}(\Omega)$.
 	   By Lemmas \ref{lm2.4}, \ref{lm3.3} and \ref{lm3.5}, we can see that there exists $u\in H^1_{0}(\Omega)$ such that $u_n\to u$ in $H^1_{0}(\Omega)$,
   	   which implies that
   	   $$I(u)=c_M \quad and\quad I^{'}(u)=0.$$
   	Thus,
      $$   0=\langle I^{'}(u),u_{-}\rangle=\int\left|\nabla u_{-}\right|^2 , $$
       which implies that $u\ge 0$.\\
       \indent Therefore, $u$ is a nonnegative nontrivial  weak solution of $(1.1)$. By Moser's iteration, it is standard to prove that $u\in L^\infty(\Omega)$, then the H\"older estimate implies that $u\in C^{0,\gamma}(\Omega) ~(0<\gamma<1)$.
       Let $\beta:[0,+\infty)\mapsto \R$ be defined by $$\beta(s):=\begin{cases}\frac{3|\mu|}{2} |s\log s^2|,\quad& s>0,\\
       0, \quad&s=0,
       \end{cases}$$
       then for $a>0$ small enough, one can see that
       $$\Delta (u)=-u^{2^*-1}-\lambda u-\mu u \log u^2\leq \beta(u)~\hbox{in}~\{x\in\Omega: 0<u(x)<a\}.$$
       We may also assume that $a<\frac{1}{e}$, then $\beta'(s)=\frac{3|\mu|}{2}(-\log s^2 -2)>|\mu|(-\log a -1)>0$ for $s\in (0,a)$. So we have that $\beta(0)=0$ and $\beta(s)$ is nondecreasing in $(0,a)$. Furthermore,
       $$\int_{0}^{\frac{a}{2}}(\beta(s)s)^{-\frac{1}{2}}ds=-\sqrt{\frac{2}{3|\mu|}}(-2\log s)^{\frac{1}{2}}\Big|_{0}^{\frac{a}{2}}=+\infty.$$
       Hence, by \cite[Theorem 1]{v}, we have that $u(x) > 0$ in $\Omega$. In particular, for any compact $K\subset\subset\Omega$, there exists $c=c(K)>0$ such that $u(x)\geq c, \forall x\in K$. Take $K\subset\subset K_1\subset \subset \Omega $ and put  $f(x):=-u(x)^{2^*-1}-\lambda u(x)-\mu u(x)\log u(x)^2$, then $\Delta u=f(x)$ in $K_1$ and $f$ is of $C^{0,\gamma}$ in $K_1$. So by the standard Schauder estimate, we see that $u\in C^{2,\gamma}(K)$. By the arbitrariness  of $K$, we obtain that $u\in C^2(\Omega)$ and $u>0$ in $\Omega$.
      The proof is completed.
   \end{proof}

 \begin{proof}[\textbf{The proof of Theorem \ref{th2}:}]
   	   By Lemma \ref{Lemma1} and the mountain-pass theorem, there exists a sequence $\{u_n\}\subset H^1_{0}(\Omega)$ such that, as $n \to \infty$,
   	   \begin{align*}
   	   I(u_{n})&\rightarrow c_M,\\
   	   I^{'}(u_n)&\rightarrow 0,\quad in\quad (H^1_{0}(\Omega))^{-1},
   	   \end{align*}
   	    combining with Lemmas \ref{lm2.41}, \ref{lm3.5} and  \ref{lm3.71},  problem \eqref{1.1} has a nonnegative nontrivial  weak solution $u$.
   Applying a similar argument as the proof of Theorem \ref{th1}, we obatin that $u\in C^2(\Omega)$ and $u(x) > 0$ in $\Omega$.
   \end{proof}

\begin{proof}[\textbf{The proof of the Theorem \ref{buth1}:}]
Assume that problem \eqref{1.1} has a positive solution $u_0$ and let $\varphi_1(x)>0$ be the first eigenfunction corresponding to $\lambda_1(\Omega)$.  Then
$$\begin{array}{ll}
\ds\int_\Omega (u_0^{2^*-1}&+\lambda u_0+\mu u_0\log u^2_0)\varphi_1(x)
=\ds\int_\Omega -\Delta u_0\varphi_1(x)\\
&=\ds\int_\Omega -\Delta \varphi_1(x)u_0
=\ds\int_\Omega \lambda_1(\Omega)\varphi_1(x)u_0,\\
\end{array}$$
which implies that
\begin{equation}\label{bu4.81}
\ds\int_\Omega (u_0^{2^*-2}+\lambda-\lambda_1(\Omega)  +\mu\log u^2_0)u_0\varphi_1(x)=0.
\end{equation}
Define
$$f(s):=s^{2^*-2}+\mu\log s^2+\lambda-\lambda_1(\Omega), s>0,$$
then
$$f'(s)=(2^*-2)s^{2^*-3}+2\mu \frac{1}{s}.$$
By a direct computation, $f'(s)=0$ has a unique root $s_0=(-\frac{(N-2)\mu}{2})^\frac{N-2}{4}$. Furthermore,
$f'(s)<0$ in $(0,(-\frac{(N-2)\mu}{2})^\frac{N-2}{4})$ and $f'(s)>0$ in $((-\frac{(N-2)\mu}{2})^\frac{N-2}{4}, +\infty)$.
 Hence,
\begin{equation}\label{bu4.82}
f(s)\geq f((-\frac{(N-2)\mu}{2})^\frac{N-2}{4})=-\frac{(N-2)\mu}{2}+\frac{(N-2)\mu}{2}\log(-\frac{(N-2)\mu}{2})+\lambda-\lambda_1(\Omega)\geq 0.
\end{equation}

Since $u_0\in H^1_0(\Omega)$ and $u_0, \varphi_1>0$,  we have $\ds\int_\Omega f(u_0(x))u_0\varphi_1(x)>0$. Otherwise, $f(u_0(x))=0$ a.e in $\Omega$. That is,
$u_0(x)=(-\frac{(N-2)\mu}{2})^\frac{N-2}{4}$ a.e in $\Omega$, which contradicts to $u_0\in H^1_0(\Omega).$
By \eqref{bu4.81} and \eqref{bu4.82}, we have that
$$0=\ds\int_\Omega (u_0^{2^*-2}+\lambda-\lambda_1(\Omega)  +\mu\log u^2_0)u_0\varphi_1(x)=\ds\int_\Omega f(u_0(x))u_0\varphi_1(x)>0,$$
a contradiction. Hence,  problem \eqref{1.1} has no positive solutions.
\end{proof}

   \section*{Acknowledgement}
     This work is supported by the special foundation for Guangxi Ba Gui Scholars and the Natural
Science Foundation of China (Nos.11801581, 12061012, 11931012), Guangdong Basic and Applied Basic Research Foundation (2021A1515010034),Guangzhou Basic and Applied Basic Research Foundation(202102020225).



\begin{thebibliography}{99}

    \bibitem{bar}Barrios, B., Colorado, E., de Pablo, A., S\'anchez, U.. On some critical problems for the fractional Laplacian operator.  J. Differential Equations. 252 (2012), 6133-6162.
	
	\bibitem{bre1}Br\'ezis, H., Coron, J.. Multiple solutions of H-systems and Rellich's conjecture. Comm. Pure Appl. Math. 37 (1984),  149-187.
	
	\bibitem{bre2}Br\'ezis, H., Nirenberg, L.. Positive solutions of nonlinear elliptic equations involving critical Sobolev exponents. Comm. Pure Appl. Math. 36 (1983), 437-477.
	
	\bibitem{edm}Edmunds, D. E., Fortunato, D., Jannelli, E.. Critical exponents, critical dimensions and the biharmonic operator. Arch. Rational Mech. Anal. 112 (1990), 269-289.
			
	\bibitem{gao}Gao, F., Yang, M.. The Br\'ezis-Nirenberg type critical problem for the nonlinear Choquard equation. Sci. China Math. 61 (2018), 1219-1242.
	
	
	\bibitem{guy}Gu, Y., Deng, Y., Wang, X.. Existence of nontrivial solutions for critical semilinear biharmonic equations. Systems Sci. Math. Sci. 7 (1994), 140-152.
	
    \bibitem{lix}Li, X., Ma, S.. Choquard equations with critical nonlinearities. Commun. Contemp. Math. 22 (2020), 23-28.

	\bibitem{lie1}Lieb, Elliott H.. Sharp constants in the Hardy-Littlewood-Sobolev and related inequalities. Ann. of Math. 118 (1983), 349-374.
	
	\bibitem{lie2}Lieb, Elliott H., Loss, M.. Analysis. Graduate Studies in Mathematics, 14. American Mathematical Society, Providence, RI, 2001.

    \bibitem{lio}Lions, P. L.. Applications de la m\'ethode de concentration-compacit\`{e} $\grave{a}$ I  existence de fonctions extr\`{e}males. C. R. Acad. Sci. Paris S\`{e}r. I Math. 296 (1983), 645-648.
	
	\bibitem{poh}Poho\v aev, S. I.. On the Eigenfunctions of the equation $\Delta u+\lambda f(u)=0$. Dokl. Akad. Nauk SSSR.  165(1965), 36-39.
	
	\bibitem{shu}Shuai, W.. Multiple solutions for logarithmic Schr\"odinger equations. Nonlinearity 32 (2019), 2201-2225.
	
	\bibitem{tau1}Taubes, C. H.. The existence of a nonminimal solution to the SU(2) Yang-Mills-Higgs equations on $\mathbb{R}^3$. I. Comm. Math. Phys. 86 (1982), 257-298.

   \bibitem{tau2}Taubes, C. H.. The existence of a nonminimal solution to the SU(2) Yang-Mills-Higgs equations on $\mathbb{R}^3$. II. Comm. Math. Phys. 86 (1982), 299-320.
	 	
	\bibitem{uhl}Uhlenbeck, Karen K.. Variational problems for gauge fields. Ann. of Math. Stud. 102(1982), 455-464. 
		
	\bibitem{van}Van der Vorst, R. C. A. M.. Best constant for the embedding of the space $H^2\cap H^1_0$ into $L^{\frac{2N}{N-2}}$. Differential Integral Equations. 6 (1993), 259-276.

\bibitem{v}V$\acute{a}$zquez,  J..   A strong maximum principle for some quasilinear elliptic equations.  Appl. Math.
Optim. 12 (1984), 191-202.
	
	\bibitem{wil}Willem, M.. Minimax theorems. Progress in Nonlinear Differential Equations and their Applications, 24. Birkh\"auser Boston, Inc., Boston, MA, 1996.


    \end{thebibliography}
\end{document}